\newtheorem{Theorem}{Theorem}[section]
\newtheorem{Proposition}[Theorem]{Proposition}
\newtheorem{Lemma}[Theorem]{Lemma}
\newtheorem{Corollary}[Theorem]{Corollary}
\newtheorem*{Claim}{Claim}
\theoremstyle{definition}
\newtheorem{Definition}[Theorem]{Definition}
\newtheorem{Fact}[Theorem]{Fact}
\newtheorem*{Question}{Question}
\newcommand{\efa}{\mathsf{EFA}}
\newcommand{\rca}{\mathsf{RCA}_0}
\newcommand{\wkl}{\mathsf{WKL}_0}
\newcommand{\aca}{\mathsf{ACA}_0}
\newcommand{\rt}{\mathsf{RT}}
\newcommand{\bl}{\mathsf{BL}}
\newcommand{\bst}{\mathsf{B}\Sigma^0_2}
\newcommand{\bsn}{\mathsf{B}\Sigma^0_n}
\newcommand{\ist}{\mathsf{I}\Sigma^0_2}
\newcommand{\bs}[1]{\mathsf{B}\Sigma_ {#1}}
\newcommand{\is}[1]{\mathsf{I}\Sigma_ {#1}}
\DeclareMathOperator{\ran}{\mathrm{ran}}
\newcommand{\floor}[1]{\lfloor #1 \rfloor}
\newcommand{\gs}{gs}
\newcommand{\imp}{\rightarrow}
\newcommand{\biimp}{\leftrightarrow}
\newcommand{\Nb}{\mathbb{N}}
\newcommand{\Sb}{\mathbb{S}}
\newcommand{\Ps}{\mathcal{P}}
\title{Brown's Lemma in second-order arithmetic}
\author[Frittaion]{Emanuele Frittaion}
 \address{Mathematical Institute, Tohoku University, Japan}
\thanks{Emanuele Frittaion's research is supported by the Japan Society for the
Promotion of Science.}
\email{frittaion@math.tohoku.ac.jp}
\urladdr{http://www.math.tohoku.ac.jp/~frittaion/}
\keywords{Reverse mathematics, combinatorial number theory, piecewise syndetic, elementary function arithmetic}
\begin{document}

\subjclass[2010]{Primary: 03B30; Secondary: 11B75}

\begin{abstract}
Brown's lemma states that in every finite coloring of the natural numbers there is a homogeneous piecewise syndetic set.
We show that Brown's lemma  is equivalent to
$\ist$ over $\rca^*$. We show in contrast that (the infinite) van der Waerden's theorem is equivalent to $\bst$ over $\rca^*$. We finally consider the finite version of Brown's lemma and show that it is provable in $\rca$ but not in $\rca^*$.  
\end{abstract}

\maketitle

\section{Introduction}

In the present paper we study Brown's lemma in the context of second-order arithmetic. Brown's lemma asserts that piecewise syndetic sets are large, in the sense that whenever we partition the natural numbers into finitely many sets, at least one set must be piecewise syndetic. 

\begin{Definition}[Piecewise syndetic]
A set $X\subseteq \Nb$ is \emph{piecewise syndetic}  if there exists 
$d\in\Nb$ 
such 
that $X$ contains arbitrarily large finite sets with gaps bounded by $d$, where a set has 
gaps bounded by $d$ if the difference between any two consecutive 
elements is $\leq d$.  We also say 
that $X$ is piecewise $d$-syndetic if $d$ witnesses that $X$ is piecewise syndetic. 
\end{Definition}

\begin{Theorem}[Brown's Lemma]\label{brown}
Every finite coloring $C\colon \Nb\to r$ has a $C$-homogeneous piecewise syndetic set. 
\end{Theorem}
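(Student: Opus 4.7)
The plan is to derive Brown's Lemma from the following effective ``ideal property'' of piecewise syndetic sets: \emph{if $A \cup B$ is piecewise $d$-syndetic and $A$ contains no $d$-syndetic subset of size $N$, then $B$ is piecewise $(Nd)$-syndetic}. To prove this key lemma, take any $d$-syndetic $F \subseteq A \cup B$ of size $n$ (witnessing piecewise $d$-syndeticity of $A \cup B$) and enumerate it in increasing order. The hypothesis on $A$ rules out $N$ consecutive elements of $F$ all lying in $A$, so $F \cap B$ has size at least $\lfloor n/N \rfloor$ and any two of its consecutive elements are separated in $\Nb$ by at most $Nd$; letting $n$ grow yields that $B$ is piecewise $(Nd)$-syndetic.

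To prove Brown's Lemma itself, I argue by contradiction: suppose no $C^{-1}(c)$ is piecewise syndetic. Then for each $c < r$ and each $d$ there is a least $N$ such that $C^{-1}(c)$ has no $d$-syndetic subset of size $N$; denote it $N(c,d)$. This function is $\Sigma^0_2$-definable, and the minimum exists by the least-number principle for $\Pi^0_1$ sets (available once we have $\iso$). Define recursively $d_0 = 1$ and $d_{k+1} = N(k, d_k) \cdot d_k$, and set $B_k := C^{-1}([k, r))$. By induction on $k \leq r$ I claim that $B_k$ is piecewise $d_k$-syndetic: the base case $B_0 = \Nb$ is trivial, and the inductive step writes $B_k = C^{-1}(k) \cup B_{k+1}$ and applies the key lemma with witness $N(k, d_k)$ to conclude $B_{k+1}$ is piecewise $d_{k+1}$-syndetic. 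Taking $k = r$ forces $B_r = \emptyset$ to be piecewise syndetic, which is absurd.

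The main obstacle -- and where $\ist$ is genuinely used -- lies in the level of this induction. The crucial move is to carry the explicit gap $d_k$ through the induction: the predicate ``$B_k$ is piecewise $d_k$-syndetic'' is $\Pi^0_2$ \emph{for specified} $d_k$, whereas ``$B_k$ is piecewise syndetic'' (existentially quantifying the gap) is $\Sigma^0_3$ and would demand strictly more than $\ist$. With the $d_k$ pinned down by the recursion, the induction on $k$ becomes $\Pi^0_2$-induction, which is exactly what $\ist = \mathsf{I}\Pi^0_2$ provides. A secondary use of $\ist$ (via $\bst$) is needed to form the finite sequence $(d_0, \ldots, d_r)$ by $\Sigma^0_2$-recursion from $N$; the care is to verify that no hidden appeal to $\bs{3}$ or stronger fragments sneaks in when assembling this recursion.
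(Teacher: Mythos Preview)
Your argument is correct, but it takes a genuinely different route from the paper's proof.

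The paper never works with piecewise syndetic sets in the induction; instead it exploits that \emph{syndetic} is only $\Sigma^0_2$. Using bounded $\Sigma^0_2$-comprehension (available from $\ist$), it forms the finite family $I=\{A\subseteq r:\bigcup_{i\in A}C^{-1}(i)\text{ is syndetic}\}$, picks a $\subseteq$-minimal $A\in I$, and then applies a single splitting lemma: if $X$ is $d$-syndetic and $X=X_0\cup X_1$ with $X_0$ not syndetic, then $X_1$ is piecewise $d$-syndetic. Minimality of $A$ guarantees that for each $i\in A$ the complementary union is not syndetic, so $C^{-1}(i)$ is piecewise $d$-syndetic. No recursion, no contradiction, and the gap $d$ never changes.

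Your approach trades this minimal-set trick for an explicit, quantitative splitting lemma (the gap degrades from $d$ to $Nd$) together with a length-$r$ recursion on the gaps and a $\Pi^0_2$-induction. You are right that fixing the sequence $(d_0,\ldots,d_r)$ in advance is what keeps the induction at the $\Pi^0_2$ level; and the formation of that sequence does go through in $\ist$, since the graph of $N$ is $\Pi^0_1\wedge\Sigma^0_1$ and one checks by $\Sigma^0_2$-induction on $k\le r$ that a suitable initial segment exists. What your lemma buys is an effective bound on the gap of the piecewise syndetic piece in terms of the failure witnesses $N(c,d)$; what the paper's argument buys is brevity and a cleaner isolation of where $\ist$ enters (a single invocation of bounded $\Sigma^0_2$-comprehension, attributed to Kreuzer).
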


Note that piecewise syndetic sets are closed under supersets and so we can reformulate Brown's lemma as follows.

\begin{Theorem}[Brown's lemma, restated]
Let $\Nb=X_0\cup\ldots X_{r-1}$  be a finite partition of the natural numbers. Then $X_i$ is piecewise syndetic for some $i<r$.
\end{Theorem}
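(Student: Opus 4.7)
The plan is to prove Brown's lemma by (external) induction on the number of colors $r$. The base case $r=1$ is immediate: $\Nb$ itself is piecewise $1$-syndetic.

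For the inductive step, let $C$ be an $(r+1)$-coloring with color classes $X_0, \ldots, X_r$, and assume Brown's lemma for $r$-colorings. If $X_0$ is already piecewise syndetic we are done, so suppose not. In particular, $X_0$ is not piecewise $1$-syndetic, so there exists $M$ such that $X_0$ contains no run of $M+1$ consecutive integers; equivalently, every window $[a,a+M]$ meets $X_0^c := X_1 \cup \ldots \cup X_r$. Moreover $X_0$ cannot be cofinite (otherwise it would be piecewise $1$-syndetic), so $X_0^c$ is infinite, and its increasing enumeration $a_0 < a_1 < a_2 < \ldots$ satisfies $a_{j+1} - a_j \leq M+1$ for every $j$.

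Next, view $j \mapsto C(a_j)$ as an $r$-coloring of $\Nb$ taking values in $\{1, \ldots, r\}$. By the inductive hypothesis there exist $i \in \{1, \ldots, r\}$ and $d'$ such that for every $k$ one can find indices $j_1 < \ldots < j_k$ with $C(a_{j_\ell}) = i$ and $j_{\ell+1} - j_\ell \leq d'$. Pulling these back to $\Nb$, the points $a_{j_1} < \ldots < a_{j_k}$ all lie in $X_i$ and satisfy $a_{j_{\ell+1}} - a_{j_\ell} \leq (M+1)\, d'$, so $X_i$ is piecewise $(M+1)d'$-syndetic, completing the induction.

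The combinatorics is straightforward, so I anticipate the real subtlety to be proof-theoretic. Extracting the witness $M$ from the failure of piecewise $1$-syndeticity (a $\Sigma^0_2$ step once one specializes the $d$-quantifier to $d=1$) and defining the enumeration of $X_0^c$ by $\Sigma^0_1$-recursion are the non-trivial ingredients over $\rca^*$; in particular, the enumeration requires that the infinitude of $X_0^c$ be converted into a bijection with $\Nb$, which is exactly the kind of move that $\ist$ is designed to justify. Tracking that each step of the reduction, together with the reindexing $j \mapsto a_j$ carrying the coloring down, stays within $\Sigma^0_2$-induction should yield the forward implication $\ist \Rightarrow$ Brown's lemma claimed in the abstract.
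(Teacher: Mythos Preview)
Your combinatorial argument is correct and is in fact a genuine simplification of the classical inductive proof reproduced in the appendix: instead of collecting all factors that avoid a colour and appealing to a compactness principle equivalent to $\aca$ (Fact~\ref{fact}), you observe that if $X_0$ fails to be piecewise $1$-syndetic then its complement is syndetic outright, enumerate it, and recurse. The paper's own proof (Theorem~\ref{forward}), however, proceeds quite differently. It does not induct on $r$; rather, it uses bounded $\Sigma^0_2$-comprehension---a known equivalent of $\ist$---to form in one stroke the finite set $\{A \subseteq r : \bigcup_{i \in A} X_i \text{ is syndetic}\}$, picks a $\subseteq$-minimal element $A$, and then shows via the two-part partition Lemma~\ref{base} that every colour in $A$ is piecewise syndetic. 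The appeal to $\ist$ is thus a single comprehension step, with no iteration.

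Your closing proof-theoretic remarks are not quite on target. Enumerating $X_0^c$ once you know its gaps are bounded by $M+1$ is bounded primitive recursion, already available in $\rca^*$; that is not where $\ist$ enters. The delicate point in your approach is rather the induction on $r$: as written it is external, so it only yields Brown's lemma for standard $r$. Internalizing it is not automatic, because the induction hypothesis ``some colour class is piecewise syndetic'' is $\Sigma^0_3$, and a naive formalization would overshoot $\ist$. One can repair this---for instance by unwinding your recursion into a $\Sigma^0_2$-least-number search for the first stage at which the remaining union ceases to be syndetic, which is essentially what the minimal-$A$ argument does---but the paper's route gets there more directly.
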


In fact, Brown's lemma generalizes to finite partitions of piecewise syndetic sets. Recall that a class of sets  $\Sb\subseteq\Ps(\Nb)$ is called \emph{partition regular} if for every $X\in \Sb$ and for every finite partition $X=X_0\cup\ldots X_{r-1}$ there exists $i<r$ such that $X_i\in\Sb$.

\begin{Theorem}[Partition regularity of piecewise syndetic sets]\label{partition PS}
Let $X\subseteq\Nb$ be piecewise syndetic and $X=X_0\cup\ldots X_{r-1}$  be a finite partition. Then $X_i$ is piecewise syndetic for some $i<r$.
\end{Theorem}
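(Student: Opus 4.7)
The plan is to proceed by induction on the number of classes $r$, the base case $r=1$ being immediate since $X_0=X$ is already piecewise syndetic. The inductive step rests on the following reduction: \emph{if $Z$ is piecewise syndetic and $Y\subseteq Z$ is not piecewise syndetic, then $Z\setminus Y$ is piecewise syndetic}. Granting this, if $X_{r-1}$ is piecewise syndetic we are done; otherwise the reduction applied with $Z=X$ and $Y=X_{r-1}$ gives that $X_0\cup\ldots\cup X_{r-2}$ is piecewise syndetic, and we invoke the inductive hypothesis with $r-1$ classes to locate a piecewise syndetic $X_i$.

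To prove the reduction, fix $d$ witnessing that $Z$ is piecewise $d$-syndetic, and instantiate the negation of ``piecewise syndetic'' for $Y$ at the specific value $d'=d$: this yields a number $N$ such that $Y$ contains no subset of size $N$ with gaps $\leq d$. Given any target length $L$, the piecewise $d$-syndeticity of $Z$ furnishes an interval $I$ on which $Z$ has $LN$ consecutive elements $y_0<y_1<\ldots<y_{LN-1}$ with $y_{k+1}-y_k\leq d$. No window of $N$ consecutive $y$'s can lie entirely in $Y$, else those $N$ elements would form a forbidden $Y$-subset of size $N$ with gaps $\leq d$; hence consecutive elements of $Z\setminus Y$ within this enumeration are at most $N$ apart in index, thus at most $Nd$ apart in value. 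Choosing one $(Z\setminus Y)$-representative from each of $L$ disjoint windows of width $N$ yields a subset of $Z\setminus Y$ of size $L$ with gaps $\leq Nd$, so $Z\setminus Y$ is piecewise $(Nd)$-syndetic.

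The main obstacle I anticipate is the quantifier-matching step: one must instantiate the negation of ``$Y$ is piecewise syndetic'' at the same witness $d$ used for $Z$, so that the resulting $N$, and hence the new syndeticity constant $Nd$, does not depend on the target length $L$. A mismatched choice would leave a witness that grows with $L$, and the induction would fail to close. Once this instantiation is performed correctly, the remainder is an elementary window/pigeonhole argument inside a long piecewise $d$-syndetic block, and the overall structure is the finitary shadow of the standard ultrafilter argument for partition regularity of piecewise syndetic sets.
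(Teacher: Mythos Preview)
Your argument is correct. The key reduction---if $Z$ is piecewise $d$-syndetic and $Y\subseteq Z$ is not piecewise syndetic, then $Z\setminus Y$ is piecewise $Nd$-syndetic for the $N$ obtained by instantiating the failure of piecewise $d$-syndeticity of $Y$---is sound, and you have identified exactly the point where care is needed (fixing $d$ before choosing $N$, so that $Nd$ is independent of the target length $L$). One small wording issue: once you have shown that consecutive $(Z\setminus Y)$-elements among the $y_k$ are at most $N$ apart in index, you should simply take \emph{all} $(Z\setminus Y)$-elements in the block; the window count gives $\geq L$ of them, and the gap bound $Nd$ follows. Literally ``choosing one representative per window'' would only give gaps $\leq (2N-1)d$, which is harmless but not what you wrote.

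Your route, however, is genuinely different from the paper's. The paper does not prove Theorem~\ref{partition PS} by a direct colour-removal induction. Instead (Theorem~\ref{regular PS}) it \emph{derives} partition regularity from Brown's lemma for colourings of $\Nb$: given a piecewise $d$-syndetic $X=X_0\cup\ldots\cup X_{r-1}$, one manufactures a specific enumeration $x_0<x_1<\ldots$ of a piecewise $d$-syndetic subset of $X$, pushes the partition forward to a colouring $C\colon\Nb\to r$ via $C(n)=i\Leftrightarrow x_n\in X_i$, applies Brown's lemma to $C$, and then checks that the pullback $\{x_n:n\in Y\}$ of the resulting homogeneous piecewise $e$-syndetic set $Y$ is piecewise $ed$-syndetic. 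The payoff of the paper's detour is reverse-mathematical: it shows that, over $\rca^*$, partition regularity of piecewise syndetic sets is \emph{equivalent} to Brown's lemma, and hence to $\ist$. Your argument is combinatorially cleaner and self-contained, but note that as written it performs induction on $r$ for a statement whose complexity is at least $\Pi^0_3$ in the parameters (``piecewise syndetic'' is $\Sigma^0_3$), so it does not immediately give the calibration the paper is after; formalising your colour-removal step and iterating it $r$ times would still land you in $\ist$, consistent with the paper's equivalence.
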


Theorem \ref{brown} was originally proved by Brown \cite{Brown68} (see also \cite{Brown69, Brown71})  in the context of locally finite semigroups.
An ergodic-theoretic proof of Theorem \ref{brown} as well as Theorem \ref{partition PS} can be found in Furstenberg  \cite[Theorem 1.23, Theorem 1.24]{Furst81}. For an algebraic-theoretic proof  of Theorem \ref{partition PS} based on the characterization of piecewise syndetic sets  in terms of $\beta\Nb$, the Stone-\v{C}ech compactification of $\Nb$,  see Hindman \cite{Hind12}. 

As piecewise syndetic sets are arithmetically ($\Sigma^0_3$)-definable and closed under 
supersets, Brown's lemma does not actually assert the existence of a set. 
In other words, this is a $\Pi^1_1$-statement.  Notice that a partition lemma of the form
\begin{itemize}
 \item[$\dag$] Every finite coloring $C\colon\Nb\to r$ has a $C$-homogeneous 
\emph{large} set,
\end{itemize}
where \emph{large} is a property about sets closed 
under supersets, is computably true. If \emph{large} is also arithmetical, then  $\dag$ is likely to be provable in $\rca+\is{n}$ if \emph{large} is $\Sigma^0_{n+1}$-definable and in $\rca+\bsn$ if \emph{large}  is $\Pi^0_n$-definable. For instance the infinite pigeonhole 
principle 
\begin{itemize}
 \item[$\rt^1_{<\infty}$]  Every finite coloring $C\colon\Nb\to r$ has an infinite $C$-homogeneous 
set,
\end{itemize}
is provable and actually equivalent to $\bst$ over $\rca$. Therefore $\rt^1_{<\infty}$ and Brown's lemma serve as an example for $n=2$.  \smallskip

Brown's lemma is related to the well-known van der Waerden's theorem.

\begin{Theorem}[Van Der Waerden's Theorem]
Every finite coloring $C\colon \Nb\to r$ has a $C$-homogenous set with arbitrarily long 
arithmetic progressions. 
\end{Theorem}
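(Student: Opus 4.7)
The plan is to derive the infinite van der Waerden theorem from its finite version together with an application of the infinite pigeonhole principle over the finite color palette. The separation makes the arithmetical content transparent: the finite combinatorics lives in a weak base theory, while the infinitary step is exactly $\rt^1_{<\infty}$, which is well known to be equivalent to $\bst$.

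First I would establish the finite van der Waerden theorem: for every $r$ and every $k$ there exists $N$ such that every $r$-coloring of $\{0,\dots,N-1\}$ admits a monochromatic arithmetic progression of length $k$. This is the classical double induction on the pair $(k,r)$, using the focusing technique on fans of partial progressions sharing a common endpoint. The proof is combinatorial and the existential quantifier ``there exists $N$'' is proved without demanding a tight bound, so it formalizes smoothly in $\rca^*$ (indeed in $\efa$).

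Given finite van der Waerden and a coloring $C\colon\Nb\to r$, for each $k\geq 1$ pick a color $i_k<r$ such that $C$ has a monochromatic AP of length $k$ in color $i_k$ inside $[0,W(r,k))$. Since $i_k$ ranges over finitely many values, infinite pigeonhole yields a single color $i<r$ with $i_k=i$ for infinitely many $k$. As an AP of length $k$ contains an AP of every length $\leq k$, the color class $C^{-1}(i)$ contains arbitrarily long arithmetic progressions and is the desired $C$-homogeneous set.

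The main obstacle is isolating exactly where the arithmetical strength enters. The finite combinatorics should go through in a very weak base; the nontrivial step is the pigeonhole on the choice function $k\mapsto i_k$. In $\rca$ this is $\rt^1_{<\infty}$ and equivalent to $\bst$, and reproducing this equivalence over $\rca^*$ (where one lacks $\iso$) is the delicate point that will determine the exact reverse-mathematical strength of the theorem and match the value announced in the abstract.
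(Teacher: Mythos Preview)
Your overall strategy matches the paper's: derive the infinite statement from the finite van der Waerden theorem together with a $\bst$-strength principle. The paper argues by contradiction---if no color class is AP, use $\bst$ as a bounding scheme to obtain a single $l$ that works for all colors, then apply finite van der Waerden to get the contradiction---whereas you argue directly via $\rt^1_{<\infty}$ applied to the choice function $k\mapsto i_k$. These are dual uses of the same principle (the paper records that $\rt^1_{<\infty}$ and $\bst$ are equivalent over $\rca^*$). The paper's contrapositive formulation has the minor advantage that it immediately yields the stronger statement, partition regularity of AP sets, with no extra work.

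One claim you should retract: that the classical double-induction (focusing/color-focusing) proof of finite van der Waerden ``formalizes smoothly in $\rca^*$ (indeed in $\efa$).'' That argument produces Ackermannian bounds on $W(r,k)$ and therefore does not go through even in $\rca$ as stated; the paper explicitly invokes Shelah's proof to obtain primitive recursive bounds and hence $\rca$-provability, and it leaves $\efa$-provability as a conjecture (Gowers' bounds are elementary recursive, but his proof is not known to formalize in $\efa$). This does not spoil your proof of the theorem itself---finite van der Waerden is available once you are in $\rca$---but it undermines the reverse-mathematical calibration you sketch in your final paragraph, and it misidentifies where the hard work in the finite theorem lies.
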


Say that $X\subseteq\Nb$ is AP if it contains arbitrarily long arithmetic 
progressions. Note that AP sets are also closed under supersets and so we can restate van der Waerden's theorem as follows.

\begin{Theorem}[Van Der Waerden's Theorem, restated]
Let $\Nb=X_0\cup\ldots X_{r-1}$  be a finite partition of the natural numbers. Then $X_i$ is AP for some $i<r$.
\end{Theorem}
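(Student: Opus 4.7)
The plan is to derive the restated form as an immediate consequence of the original Van der Waerden's Theorem, using the observation (already noted in the excerpt) that AP sets are closed under supersets.

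Given a finite partition $\Nb = X_0\cup\ldots\cup X_{r-1}$, I would define the coloring $C\colon\Nb\to r$ by setting $C(n)=i$ whenever $n\in X_i$. Applying the original Van der Waerden's Theorem produces a $C$-homogeneous AP set $H$ of some color $i<r$. By construction $H\subseteq X_i$, and since AP is preserved upward under $\subseteq$, the set $X_i$ itself is AP. The reverse implication is symmetric: given a coloring $C\colon\Nb\to r$, set $X_i = C^{-1}(i)$; the restated form then supplies some $i<r$ with $X_i$ being AP, and $X_i$ is trivially $C$-homogeneous in color $i$.

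Because both directions rely only on the elementary fact that a superset of an AP set is AP, there is no serious obstacle. The main thing to check is that the conversion between a coloring $C$ and its associated partition $(C^{-1}(i))_{i<r}$ is available in the ambient formal theory, and that the superset-closure step (``$H\subseteq Y$ and $H$ contains an arithmetic progression of length $k$ implies $Y$ contains one of length $k$'') is of low enough complexity to be invoked directly; both are trivial in any reasonable base theory, so the equivalence is immediate at the level of principle statements and the restated form inherits whatever strength or provability the original enjoys.
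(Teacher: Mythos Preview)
Your proposal is correct and matches the paper's treatment exactly: the paper does not give a separate proof but simply observes that, since AP sets are closed under supersets, the coloring formulation and the partition formulation are immediate restatements of one another. Your write-up just spells out the obvious details of that remark.
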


Also van der Waerden's theorem generalizes to finite partitions of AP sets. 

\begin{Theorem}[Partition regularity of AP sets]\label{partition AP}
Let $X\subseteq\Nb$ be AP and $X=X_0\cup\ldots X_{r-1}$  be a finite partition. Then $X_i$ is AP for some $i<r$.
\end{Theorem}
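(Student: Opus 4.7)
The plan is to reduce partition regularity of AP sets to (infinite) van der Waerden's theorem on $\Nb$ via its finite form. The idea is that, since $X$ is AP, it already carries arbitrarily long arithmetic progressions, so inside one such progression of sufficient length the partition $X = X_0 \cup \ldots \cup X_{r-1}$ becomes a finite coloring of a finite interval, and finite vdW extracts a monochromatic sub-AP lying entirely in some $X_i$. A finite pigeonhole argument then pins down a single $i$ that works for every target length.

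In detail, I would first obtain the finite van der Waerden numbers $W(k,r)$: the least $N$ such that every $r$-coloring of $\{0, \ldots, N-1\}$ contains a monochromatic AP of length $k$. This follows from (infinite) van der Waerden's theorem by compactness---e.g., by $\wkl$ applied to the tree of partial $r$-colorings of initial segments without monochromatic APs of length $k$---or directly by the usual combinatorial induction. Now fix $k$, set $N = W(k,r)$, choose an AP $a, a+d, \ldots, a + (N-1)d$ contained in $X$, and define $c(j) = i$ iff $a + jd \in X_i$. Applying finite vdW to $c$ produces a monochromatic AP $j_0, j_0 + e, \ldots, j_0 + (k-1)e$, whose image
\[
 a + j_0 d,\ a + (j_0 + e)d,\ \ldots,\ a + (j_0 + (k-1)e)d
\]
is an AP of length $k$ (with common difference $ed$) sitting inside a single $X_i$. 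Hence for each $k$ some $X_i$ contains an AP of length $k$. Finally, the sets $S_k = \{i < r : X_i \text{ contains an AP of length } k\}$ are nonempty and decreasing in $k$ (any AP of length $k+1$ has an initial AP of length $k$), so they stabilize to some nonempty $S_\infty \subseteq \{0, \ldots, r-1\}$, and every $i \in S_\infty$ satisfies that $X_i$ is AP.

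The main obstacle is the reverse-mathematical cost of passing from the infinite to the finite form of van der Waerden's theorem. Over $\aca$ this reduction is routine, but over the weaker $\rca^*$ of the present paper one must be careful: it may require invoking $\wkl$ or, preferably, a direct inductive proof of finite vdW so as to keep the base theory weak. The pigeonhole step is essentially free, since $S_k \subseteq \{0,\ldots,r-1\}$ and ``$X_i$ contains an AP of length $k$'' is $\Sigma^0_1$ in $i$ and $k$; stabilization of a decreasing sequence in the finite power set needs only very mild induction. Thus the theorem should follow from van der Waerden's theorem together with a modest amount of induction, matching the expectation that its strength coincides with the strength the paper assigns to vdW itself.
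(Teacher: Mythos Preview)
Your argument is correct and, at its core, the same as the paper's: pick a long arithmetic progression inside $X$, color its indices by which $X_i$ the corresponding term lies in, apply finite van der Waerden to extract a monochromatic sub-progression, and then use a pigeonhole/bounding step to pin down a single color that works for every target length.

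The two presentations differ only in how that last step is phrased. The paper argues by contradiction and invokes $\bst$ explicitly: if no $X_i$ is AP, each $X_i$ fails at some length $l_i$, and $\bst$ supplies a uniform $l$; a single application of finite van der Waerden at length $W(r,l)$ then gives the contradiction. Your direct argument via the decreasing, nonempty sets $S_k\subseteq r$ and their stabilization is the contrapositive of this: ``$\bigcap_k S_k\neq\emptyset$'' fails exactly when for each $i<r$ there is $k_i$ with $i\notin S_{k_i}$, and bounding the $k_i$ is precisely $\bst$. So ``very mild induction'' understates the cost of the stabilization step; that step \emph{is} the $\bst$ content, which is why the paper singles it out. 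The paper's formulation has the advantage of making the reverse-mathematical bookkeeping transparent.

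One further small point: you propose extracting finite van der Waerden from the infinite version via compactness/$\wkl$. The paper instead uses finite van der Waerden as an input, available already in $\rca$ by Shelah's primitive-recursive proof. Over the base theory $\rca^*$ used here, that route is cleaner and avoids any detour through $\wkl$.
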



It is known that every piecewise syndetic set contains arbitrarily long arithmetic progressions and so Brown's lemma is a generalization of van der Waerden's  theorem. On the other hand, AP sets need not be piecewise syndetic and so partition regularity for piecewise syndetic sets does not imply partition regularity for AP sets.  The proof that piecewise syndetic sets are AP uses the finite version of van der Waerden's theorem (see Rabung \cite{Rab75}).

\begin{Theorem}[Van Der Waerden's Theorem, finite]
For all $r,l$ there exists $n$ such that every coloring $C\colon n\to r$ has a
$C$-homogeneous arithmetic progression of length $l$.
\end{Theorem}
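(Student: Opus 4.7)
The plan is to derive the finite version from the infinite van der Waerden's theorem stated just above via a standard compactness argument. I proceed by contradiction: suppose there exist $r$ and $l$ such that for every $n$ some coloring $C\colon n\to r$ admits no $C$-homogeneous arithmetic progression of length $l$. Call such a coloring \emph{bad}.

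Next, I collect the bad colorings into a subtree $T\subseteq r^{<\Nb}$, ordered by extension. The restriction of a bad coloring to a shorter initial segment is again bad, so $T$ is closed under initial segments; by the contradiction hypothesis $T$ contains bad colorings of every finite length, and hence $T$ is infinite. Since $T$ is finitely branching (with at most $r$ children per node), König's lemma (equivalently, $\wkl$ after a standard binary encoding of $r$-ary trees) produces an infinite path. This path is a coloring $f\colon\Nb\to r$ all of whose initial segments are bad, hence $f$ admits no $f$-homogeneous arithmetic progression of length $l$, directly contradicting the infinite van der Waerden's theorem.

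The main obstacle is essentially formal: one must ensure the compactness step is available in the ambient base theory, but $\wkl$ suffices. An alternative route avoiding compactness is to give a self-contained combinatorial proof by double induction on $l$ and $r$ in the style of van der Waerden's original argument (or Shelah's streamlined version), which also produces explicit bounds and works in a weak fragment such as $\efa$. Either approach is sufficient for the downstream uses of the finite statement, in particular for Rabung's proof that piecewise syndetic sets are AP.
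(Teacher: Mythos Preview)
Your compactness argument is mathematically correct and is the textbook derivation of the finite statement from the infinite one. The paper, however, does not supply a proof of this theorem at all: it states the finite van der Waerden theorem as background, and the next sentence records (as ``Folklore'') that it is provable in $\rca$, with Shelah's primitive-recursive proof of Hales--Jewett as the implicit justification. So the paper's intended route is precisely the direct combinatorial argument you mention only as an ``alternative'' in your closing paragraph, not compactness.

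The difference is not cosmetic in this paper's setting. First, in Section~\ref{section vdw} the infinite van der Waerden theorem is obtained \emph{from} the finite one (together with $\bst$), so within the paper's logical order your main argument is circular: you are invoking the infinite theorem before it has been established. Second, even granting the infinite theorem externally, your route only places the finite version in $\wkl+\bst$, whereas the paper needs it already in $\rca$ (and uses it inside $\rca^*$ arguments). The Shelah-style induction buys exactly that: provability in $\rca$ with explicit primitive recursive bounds, with no appeal to compactness or to the infinite theorem. Your final paragraph is thus the part that matches the paper; the compactness derivation, while valid as pure mathematics, does not serve the reverse-mathematical purpose here.
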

Let  $W(r,l)$ be the least number 
$n$ such that every $r$-coloring of $n$ has a homogeneous arithmetic progression of 
length $l$. By Shelah \cite{She88} the function $W(l,r)$ has primitive recursive upper bounds. These bounds were otained  as a byproduct of a new proof of Hales-Jewett theorem which avoids the use of double induction and can be formalized in 
$\rca$.\footnote{All the previous proofs of Hales-Jewett theorem used double induction giving 
Ackermannian upper bounds. Gowers \cite{Gow98, Gow01} in his 
celebrated work on Szemer\'{e}di's 
theorem obtained elementary recursive upper bounds for the van der Waerden's numbers.
Gowers' bound is the following: 
\[   W(r,l)\leq 2^{2^{f(r,l)}}, \text{ where } f(r,l)=r^{2^{2^{l+9}}}. \]
However, the proof is far from elementary and so we can only conjecture that van der 
Waerden's theorem is provable in $\efa$. On the other hand, the lower bounds for the van der Waerden's numbers are 
exponential (see \cite{GraRotSpe90}), and hence van der Waerden's theorem is 
not provable in bounded arithmetic.}

\begin{Theorem}[Folklore]
The finite version of van der Waerden's theorem is provable in $\rca$.
\end{Theorem}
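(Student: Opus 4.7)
The statement is $\Pi^0_2$, of the form $\forall r,l\,\exists n\,\Phi(r,l,n)$, where $\Phi$ asserts that every $r$-coloring of $n$ has a monochromatic AP of length $l$; $\Phi$ is $\Delta^0_0$, since colorings and candidate APs are bounded by $n$. As $\rca$ proves the totality of every primitive recursive function, it suffices to exhibit a primitive recursive bound $g(r,l)$ and to prove inside $\rca$ the $\Pi^0_1$ statement $\forall r,l\,\Phi(r,l,g(r,l))$.

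The plan is to formalize Shelah's proof of the Hales--Jewett theorem (referenced in the footnote above), which produces primitive recursive bounds using \emph{single} induction on the alphabet size. First, I would define by primitive recursion the Shelah bound $g_{\mathrm{HJ}}(r,t)$ for the Hales--Jewett number and prove by induction on $t$ the $\Pi^0_1$ statement
\[
\varphi(t)\ \equiv\ \forall r\,\forall C\colon [t]^{g_{\mathrm{HJ}}(r,t)}\to r\ \exists\ \text{monochromatic combinatorial line}.
\]
The base case $t=1$ is trivial; the inductive step is Shelah's cube argument, which, starting from a coloring of $[t+1]^N$ with $N=g_{\mathrm{HJ}}(r,t+1)$ chosen large enough in terms of $g_{\mathrm{HJ}}(r,t)$, uses pigeonhole along one coordinate to locate parallel variable words sharing a color pattern, then invokes $\varphi(t)$ to combine them into a monochromatic combinatorial line in alphabet $[t+1]$. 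All witnesses remain bounded by the primitive recursive $g_{\mathrm{HJ}}$, so $\varphi(t)$ stays $\Pi^0_1$ and induction on $t$ is available via $\iso$, a subtheory of $\rca$. Second, I would deduce finite van der Waerden from $\varphi$ by the standard coordinate-sum reduction $(x_1,\ldots,x_N)\mapsto\sum_i x_i$, mapping $[l]^N$ into $\{N,\ldots,lN\}$ and sending monochromatic combinatorial lines to monochromatic APs of length $l$.

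The main obstacle is the quantifier discipline on the induction. A proof that produces only a $\Sigma^0_1$-defined but not $\iso$-provably-total bound---such as the classical Gallai double induction, whose Ackermannian bounds are not primitive recursive---forces the induction hypothesis up to $\Pi^0_2$ and so escapes $\rca$. Shelah's contribution is precisely to replace the double induction with a single induction and an explicit primitive recursive bound; once this is in hand, the bookkeeping inside $\rca$ is routine.
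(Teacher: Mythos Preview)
Your proposal is correct and follows the approach the paper alludes to: the paper does not give a detailed proof of this folklore theorem, but in the paragraph immediately preceding it points to Shelah's proof of Hales--Jewett, noting that it avoids double induction, yields primitive recursive bounds, and can therefore be formalized in $\rca$. Your sketch fleshes out exactly this route---primitive recursive $g_{\mathrm{HJ}}$, $\Pi^0_1$ induction on the alphabet size, and the standard coordinate-sum reduction to van der Waerden---so there is nothing to add.
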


Working in the framework of reverse mathematics we prove that Brown's 
lemma is equivalent to $\ist$ over $\rca^*$ and so is the statement that piecewise syndetic sets are partition regular. We also show that van der Waerden's 
theorem and partition regularity for AP sets are equivalent to $\bst$ over $\rca^*$.  \smallskip 

We finally consider the finite version of Brown's lemma.  

\begin{Definition}
Let $H\subseteq\Nb$ be finite. Define the \emph{gap size} of $H$, denoted $\gs(H)$, as 
the largest difference between two consecutive elements of $H$. In other words, the gap 
size of $H$ is the least $d$ such that $H$ has gaps bounded by $d$. If $|H|\leq 1$ let $\gs(H)=1$.
\end{Definition}

\begin{Theorem}[Brown's Lemma, finite]
Let $f\colon\Nb\to\Nb$. Then for all $r>0$ there exists $n$ such that every coloring
$C\colon n\to r$ has a $C$-homogeneous set $H$ with $|H|> f(\gs(H))$.
\end{Theorem}

The finite version of Brown's lemma is reminiscent of the Paris-Harrington principle. We can think of $|H|> f(\gs(H))$ as a largeness condition on $H$. 

\begin{Definition}
For $f\colon\Nb\to\Nb$, let $\bl_f$ be the statement ``For all $r>0$ there exists $n$ 
such that every 
$C\colon n\to r$ has a $C$-homogeneous set $H$ such that $|H|> f(\gs(H))$.
\end{Definition}

We show that the finite version of Brown's lemma  is provable in $\rca$ but not in $\rca^*$.  We obtain the latter result by showing that $\bl_f$ is not provable in $\efa$ for  $f(d)=2^d$. On the other hand, $\bl_f$ is provable in $\efa$ for $f(d)=d$.  Therefore  the finite version of Brown's lemma provides a natural example of phase transition with respect to  $\efa$. \smallskip

In the appendix we discuss the original proof of Brown's lemma and point out a disguised use of $\aca$.

\section{Preliminaries}

We assume familiarity with the basic systems 
of reverse mathematics. The standard reference is \cite{Sim09}.   
In this paper we are mainly concerned with the base systems $\rca^*$, $\rca$ and the induction schemes $\ist$ and $\bst$.  Recall that $\rca^*$ consists of the usual axioms for addition, multiplication, exponentiation plus $\Sigma^0_0$-induction and $\Delta^0_1$-comprehension. The system $\rca$ consists of the usual axioms for addition and multiplication plus $\Sigma^0_1$-induction and $\Delta^0_1$-comprehension. We have $\rca=\rca^*+\mathsf{I}\Sigma^0_1$. We  use the following fact repeatedly.
\begin{Theorem}[see Hirst \cite{Hirst87} and Yokoyama \cite{Yokoyama13}]
Over $\rca^*$, $\rt^1_{<\infty}$ is equivalent to $\bst$.
\end{Theorem}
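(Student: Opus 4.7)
My plan is to prove the two implications separately. The easy direction, $\bst \Imp \rt^1_{<\infty}$, uses $\bst$ directly: given $C \colon \Nb \to r$, suppose for contradiction that every color class is bounded, i.e.\ $(\forall i < r)(\exists n)(\forall m > n)(C(m) \neq i)$. The inner existential is $\Sigma^0_2$, so $\bst$ yields a uniform $N$ with $(\forall i < r)(\exists n < N)(\forall m > n)(C(m) \neq i)$. For $m \geq N$ this forces $C(m)$ to differ from every $i < r$, contradicting the range of $C$; hence some color $c < r$ has unbounded preimage, and $H = \{m : C(m) = c\}$ exists by $\Delta^0_1$-comprehension in $\rca^*$.

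For $\rt^1_{<\infty} \Imp \bst$, I prenex a $\Sigma^0_2$ instance of $\bst$ into the form $(\forall i < a)(\exists y)\phi(i, y)$ with $\phi(i, y) \equiv (\forall v)\theta(i, y, v)$, $\theta$ bounded. Let $\phi^{(n)}(i, y) \equiv (\forall v \leq n)\theta(i, y, v)$, a $\Sigma^0_0$ approximation, monotone decreasing in $n$, with $\phi(i, y) \leftrightarrow \bigwedge_n \phi^{(n)}(i, y)$. Define $y_i^{(n)} = \mu y \, \phi^{(n)}(i, y)$; under the hypothesis any true $\phi$-witness for $i$ also witnesses every $\phi^{(n)}(i, \cdot)$, so $y_i^{(n)}$ is a total, non-decreasing $\Delta^0_1$ function of $n$. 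Let $D(n) = \max_{i < a} y_i^{(n)}$ and let $C(n)$ be the least $i < a$ attaining this maximum. If $D$ were unbounded, $\rt^1_{<\infty}$ applied to $C \colon \Nb \to a$ would give an infinite $H$ with $C \equiv i^*$, on which $y_{i^*}^{(n)} = D(n)$; by monotonicity of $D$ this forces $y_{i^*}^{(n)}$ to be unbounded, whence $\neg \phi^{(n)}(i^*, y)$ for large $n$ and every fixed $y$, and therefore $\neg \phi(i^*, y)$ for every $y$---contradicting the hypothesis for $i^*$. So $D \leq B$ for some $B$, whence every $y_i^{(n)} \in \{0, \ldots, B\}$. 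A second application of $\rt^1_{<\infty}$ to the $(B+1)^a$-valued product coloring $n \mapsto (y_0^{(n)}, \ldots, y_{a-1}^{(n)})$ produces an infinite $H'$ on which each $y_i^{(n)}$ stabilizes at some $y_i^*$; monotonicity extends this to every $n \geq \min H'$, so $\phi^{(n)}(i, y_i^*)$ holds for all such $n$, giving $\phi(i, y_i^*)$. The bound $B + 1$ thus witnesses $\bst$.

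The main obstacle is the absence of $\iso$ in $\rca^*$: the natural selector $i \mapsto \mu y \, \phi(i, y)$ is not available as a $\Delta^0_1$ function, and the usual ``pick a witness and take the max'' argument collapses. Replacing $\phi$ by its $\Sigma^0_0$ approximation $\phi^{(n)}$ produces a genuine $\Delta^0_1$ function $y_i^{(n)}$, and the two successive invocations of $\rt^1_{<\infty}$---first ruling out a runaway coordinate, then simultaneously stabilizing the finitely many monotone bounded approximants---substitute for the missing monotone-bounded-convergence principle.
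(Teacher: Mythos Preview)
The paper does not give its own proof of this theorem; it is quoted as a known result with citations to Hirst and Yokoyama and then used freely throughout. So there is no proof in the paper to compare your attempt against.

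Your argument is nonetheless correct. The direction $\bst\Rightarrow\rt^1_{<\infty}$ is the standard one. For $\rt^1_{<\infty}\Rightarrow\bst$, replacing the unavailable $\Sigma^0_2$ least-witness selector by the $\Sigma^0_0$-definable approximants $y_i^{(n)}=\mu y\,\phi^{(n)}(i,y)$ is exactly the right move in $\rca^*$, and your first invocation of $\rt^1_{<\infty}$ to rule out an unbounded $D$ is clean. Two small remarks. First, forming $D(n)=\max_{i<a}y_i^{(n)}$ and the coloring $C$ tacitly uses $\mathsf{B}\Sigma^0_1$ (to bound the $a$ values $y_i^{(n)}$); this is provable in $\rca^*$, but since you explicitly flag working below $\mathsf{I}\Sigma^0_1$, it is worth saying so. Second, the second application of $\rt^1_{<\infty}$ is dispensable: once $D\leq B$, fix $i<a$ and suppose $(\forall y\leq B)\neg\phi(i,y)$, i.e.\ $(\forall y\leq B)(\exists v)\neg\theta(i,y,v)$; by $\mathsf{B}\Sigma^0_1$ there is $V$ with $(\forall y\leq B)(\exists v<V)\neg\theta(i,y,v)$, so $\phi^{(V)}(i,y)$ fails for every $y\leq B$ and hence $y_i^{(V)}>B\geq D(V)$, a contradiction. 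Thus $B+1$ already witnesses the instance of $\bst$. Your route via the product coloring is not wrong, merely a little longer.
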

 
Simpson and Smith \cite{SimSmi86} proved that the first-order part of $\wkl^*$, that is $\rca^*$ plus  Weak K\"{o}nig's Lemma,  is the same as that of $\bs1+\exp$ and that $\wkl^*$ is $\Pi_2$-conservative over $\is0+\exp$, also known as $\efa$ (Elementary Function Arithmetic).  Recall that the provably recursive functions of $\efa$ are exactly the elementary recursive functions and that every elementary recursive function $f(n)$ is dominated by
$2_k(n)$ for some number $k$, where $2_k(n)$ is the superexponential function defined by $2_0(n)=n$ and 
$2_{k+1}(n)=2^{2_k(n)}$ (see for instance \cite{SchWai12}).

\section{Piecewise syndetic sets}

In the present paper we do not consider the characterization of piecewise syndetic sets in terms of $\beta\Nb$ (cf.\ Hindman \cite[Theorem 4.40]{Hind12}).  The purpose of this section is to show that, with  few exceptions, most of the elementary characterizations for piecewise syndetic sets can be proved within $\rca^*$.

\begin{Definition}
An infinite set $X\subseteq\Nb$ is \emph{syndetic} if there exists $d\in\Nb$ such that 
$X$ has gaps bounded by $d$. We say that $X$ is $d$-syndetic if $d$ is such a witness. A set $X\subseteq\Nb$ is \emph{thick} if it contains 
arbitrarily long intervals of natural numbers.
\end{Definition}

\begin{Proposition}[$\rca^*$]
Let $X\subseteq\Nb$. The following are equivalent:
\begin{enumerate}[$(1)$]
\item $X$ is piecewise syndetic;
\item $X$ is the intersection of a syndetic set and a thick set.
\end{enumerate}
\end{Proposition}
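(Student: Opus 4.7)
The strategy is to prove both directions of the equivalence, with the bulk of the work in $(1) \Rightarrow (2)$, where care is needed to stay within $\rca^*$.

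For the easier direction $(2) \Rightarrow (1)$, suppose $X = S \cap T$ with $S$ being $d$-syndetic and $T$ thick. Given any $n$, I invoke thickness to produce an interval $I \subseteq T$ of length at least $nd$; since $S$ is $d$-syndetic, every length-$d$ subinterval of $I$ meets $S$, so $S \cap I$ has at least $n$ elements with consecutive differences at most $d$. Since $I \subseteq T$, we have $S \cap I \subseteq S \cap T = X$, so $X$ is piecewise $d$-syndetic.

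For the main direction $(1) \Rightarrow (2)$, suppose $X$ is piecewise $d$-syndetic. I would define
\[
T = \{n \in \Nb : \exists u, v \in X\,(u \leq n \leq v \text{ and } v - u \leq d)\}, \qquad S = X \cup (\Nb \setminus T).
\]
The quantifier in the definition of $T$ is implicitly bounded ($u \leq n$ and $v \leq u + d$), so both $T$ and $S$ are $\Delta^0_1$ in $X$ and exist in $\rca^*$ by $\Delta^0_1$-comprehension. I would then check three things. First, $T$ is thick: given $k$, piecewise $d$-syndeticity yields $F \subseteq X$ with $|F| \geq k + 1$ and gaps $\leq d$, and setting $u_* = \min F$, $v_* = \max F$ gives an interval $[u_*, v_*]$ of length at least $k$ and contained in $T$, since any $n \in [u_*, v_*]$ lies between its two closest $F$-neighbors $a \leq n \leq b$ with $b - a \leq d$. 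Second, $S$ is $d$-syndetic: for any maximal interval $[a, b] \subseteq T \setminus X = \Nb \setminus S$, pick witnesses $u, v \in X$ for $a \in T$; since $a \notin X$ we have $u \leq a - 1$, and since $v \in X$ cannot lie in the $X$-free block $[a, b]$ while $v > a$, we have $v \geq b + 1$; combining with $v \leq u + d \leq a + d - 1$ gives $b - a \leq d - 2$, so consecutive $S$-elements differ by at most $d$. Third, $X = S \cap T$: taking $u = v = n$ shows $X \subseteq T$, whence $X \subseteq S \cap T$, and conversely any $n \in S \cap T$ must lie in $X$ because $n \in T$ rules out $n \in \Nb \setminus T$.

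The main obstacle I anticipate is the formalization in $\rca^*$ rather than the mathematics: the most natural construction would iteratively extract a sequence of ever-longer $d$-syndetic pieces of $X$ and take their union as the skeleton of $T$, but the $\Sigma^0_1$ minimization involved appears to require $\iso$. The key technical point is to circumvent this by giving $T$ directly via a bounded existential quantifier, so that $\Delta^0_1$-comprehension suffices for the construction and no induction beyond $\Sigma^0_0$ is used in the verification.
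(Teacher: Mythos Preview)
Your proof is correct and follows essentially the same strategy as the paper: build a thick set containing $X$ by a $\Delta^0_0$ definition, then take the syndetic set to be $X$ together with the complement of the thick set. The only difference is cosmetic: the paper uses $Z=\bigcup_{s<d}(X+s)$ as its thick set and proves syndeticity of $X\cup(\Nb\setminus Z)$ by contradiction, whereas you use the ``gap-fill'' set $T=\{n:\exists u,v\in X\,(u\le n\le v\wedge v-u\le d)\}$ and bound the gaps of $S$ directly; both choices are $\Delta^0_0$ in $X$ and both arguments go through in $\rca^*$.
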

\begin{proof}
We argue in $\rca^*$. Implication $(2)\imp(1)$ is straightforward.. For $(1)\imp(2)$ suppose $X$ is piecewise $d$-syndetic. Let $Z=\bigcup_{s<d}X+s=\bigcup_{s<d}\{ x+s\colon x\in X\}$. We claim that $Z$ is thick. Fix $n$ and let $H$ be an $n$-element subset of $X$  such that $\gs(H)\leq d$. 
Then $I=\bigcup_{s<d}H+s\subseteq Z$ is an interval of size at least $n$. Define $Y=X\cup (\Nb\setminus Z)$. Clearly $X=Y\cap Z$. It suffices to show that $Y$ is syndetic.  Suppose not. Then $A=\Nb\setminus Y$ is thick and $A\subseteq Z$. In particular $X\cap A=\emptyset$. Let $I\subseteq A$ be an interval of size at least $d$. Every element of $I$ is of the form $x+s$ with $x\in X$ and $s<d$. It follows that $X\cap I\neq\emptyset$ since otherwise $|I|<d$, and so $X\cap A\neq\emptyset$, a contradiction. 
\end{proof}

\begin{Definition}
We say that $I\subseteq X$ is an \emph{interval} of $X$ if for all $x<y$ in $I$ we have $(x,y)\cap X=I$.
\end{Definition}

\begin{Proposition}
Let $X\subseteq\Nb$. The following are equivalent:
\begin{enumerate}[$(1)$]
\item  There exists $d$ such that $\gs(H)\leq d$ for arbitrarily large finite sets $H\subseteq X$, i.e.\ $X$ is piecewise syndetic;
\item There exists $d$ such that $\gs(H)=d$ for arbitrarily large finite sets $H\subseteq X$;
\item There exists $d$ such that $\gs(I)\leq d$ for arbitrarily long intervals $I$ of $X$;
\item There exists $d$ such that $\gs(I)=d$ for arbitrarily long intervals $I$ of $X$.
\end{enumerate}
\end{Proposition}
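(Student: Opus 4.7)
The plan is to prove the cycle $(2) \Rightarrow (1) \Rightarrow (3) \Rightarrow (4) \Rightarrow (2)$. Three arrows are immediate: $(2) \Rightarrow (1)$ and $(4) \Rightarrow (3)$ hold because $\gs(H) = d$ entails $\gs(H) \leq d$, and $(4) \Rightarrow (2)$ holds because an interval of $X$ is in particular a finite subset of $X$. So the content is in the two remaining implications.

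For $(1) \Rightarrow (3)$, given a finite $H \subseteq X$ with $\gs(H) \leq d$, I would set $H' = X \cap [\min H, \max H]$, a bounded set available by $\Delta^0_1$-comprehension. By construction $H'$ is an interval of $X$; the inclusion $H' \supseteq H$ gives $|H'| \geq |H|$; and $\gs(H') \leq \gs(H) \leq d$ since inserting elements of $X$ into the range of $H$ can only subdivide existing gaps. Thus witnesses of (1) are transported to witnesses of (3) with the same parameter $d$.

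For $(3) \Rightarrow (4)$, the idea is a bounded pigeonhole on the $d$ possible values of $\gs(I)$. Suppose (4) fails for every $d' \in \{1,\ldots,d\}$: then for each such $d'$ there is some $n_{d'}$ beyond which no interval $I \subseteq X$ with $|I| \geq n_{d'}$ has $\gs(I) = d'$. Collecting these finitely many bounds into $N = \max_{d' \leq d} n_{d'}$, any interval of size $\geq N$ must have $\gs(I) > d$, contradicting (3).

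The main obstacle is precisely this collection step: the failure of (4) at $d'$ is $\Sigma^0_2$ in $d'$, so collecting it bounded in $d' \leq d$ is essentially an instance of $\bst$. I therefore expect the full equivalence to sit naturally over $\rca^* + \bst$, while the restricted cycle $(1) \Leftrightarrow (3)$ coming from the extension construction plus the trivial observation that intervals are finite subsets is already provable in plain $\rca^*$.
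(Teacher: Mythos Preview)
Your proposal is correct and matches the paper's treatment. The paper does not give a formal proof of this proposition; it simply remarks that most implications are trivial and provable in $\rca^*$, singles out $(1)\Rightarrow(2)$ and $(1)\Rightarrow(4)$ as the nontrivial ones, and then in the subsequent proposition shows that $(1)\Rightarrow(4)$ is equivalent to $\bst$ over $\rca^*$. Your cycle $(2)\Rightarrow(1)\Rightarrow(3)\Rightarrow(4)\Rightarrow(2)$ and your identification of the $(3)\Rightarrow(4)$ step as the one requiring $\bst$ reproduce exactly this analysis, since $(1)\Leftrightarrow(3)$ is among the trivial implications. The only cosmetic difference is that the paper phrases the $\bst$ argument via $\rt^1_{<\infty}$ (color each $n$ by $\gs(I_n)$ for a chosen $n$-element interval $I_n$) rather than via direct $\Pi^0_1$-collection as you do; the two are of course interchangeable. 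One small point the paper adds which you do not: it observes that the reversal from $(1)\Rightarrow(2)$ back to $\bst$ does \emph{not} obviously work (because homogeneity for gap size exactly $d$ propagates to all multiples of $d$), and leaves the exact strength of $(1)\Rightarrow(2)$ as an open question.
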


Most implications are trivial and provable in $\rca^*$. The only nontrivial implications are $(1)\imp(2)$ and $(1)\imp(4)$. 

\begin{Proposition}\label{d} Over $\rca^*$, $\bst$ is equivalent to  $(1)\imp(4)$.
\end{Proposition}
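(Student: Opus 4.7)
The plan is to prove each direction of the equivalence. For the forward direction $\bst \Rightarrow ((1) \Rightarrow (4))$, I would work in $\rca^* + \bst$, fix $X$ piecewise $d_0$-syndetic, and argue by contradiction. Assuming $(4)$ fails, for each $d \leq d_0$ there exists $L$ such that every interval $I$ of $X$ with $|I| > L$ satisfies $\gs(I) \neq d$; call this $\Pi^0_1$ assertion $\psi(d, L)$, which is monotone increasing in $L$. Applying bounded $\Pi^0_1$-collection (equivalent to $\bst$ over $\rca^*$) to $\forall d \leq d_0 \, \exists L \, \psi(d, L)$ produces a common $L^*$ such that every interval of $X$ of length greater than $L^*$ has gap size exceeding $d_0$, contradicting $(1)$.

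For the reverse implication, I would derive $\rt^1_{<\infty}$ from $(1) \Rightarrow (4)$; this is enough since $\rt^1_{<\infty}$ is equivalent to $\bst$ over $\rca^*$. Given $c \colon \Nb \to r$, construct $X = \{x_n : n \in \Nb\}$ by primitive recursion with
\[
x_0 = 0, \qquad x_{n+1} = \begin{cases} x_n + r + k + 1 & \text{if } n + 1 = 2^k \text{ for some } k, \\ x_n + c(n) + 1 & \text{otherwise.} \end{cases}
\]
Here $x_n$ has polynomial growth, so the construction is available in $\rca^*$. Non-barrier gaps lie in $\{1, \ldots, r\}$, while barrier gaps form the pairwise distinct sequence $r+1, r+2, r+3, \ldots$ at indices $2^k - 1$. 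The segments $S_k = \{x_{2^k}, \ldots, x_{2^{k+1}-1}\}$ of length $2^k$ have only non-barrier internal gaps, so $X$ is piecewise $r$-syndetic. For each $d' > r$, say $d' = r + k + 1$, any interval of $X$ with $\gs = d'$ must contain the unique barrier of size $d'$ and avoid the next barrier of size $r + k + 2$, forcing it into an index range of length at most $2^{k+1}$; hence only $d' \leq r$ can witness $(4)$ for $X$. Applying $(1) \Rightarrow (4)$ then yields $d' \leq r$ and arbitrarily long intervals $I$ of $X$ with $\gs(I) = d'$. Each such $I$ lies inside a single $S_k$, so $|I| \leq |S_k| = 2^k$, whence the starting index of $I$ is at least $2^k \geq |I|$; and the condition $\gs(I) = d'$ picks out some index $l \geq |I|$ at which the non-barrier gap $d' = c(l) + 1$ occurs. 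Therefore $c^{-1}(d' - 1)$ is unbounded, giving $\rt^1_{<\infty}$.

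The main obstacle is the reverse direction: ensuring that the witness $d'$ delivered by $(4)$ corresponds to a color-encoding gap rather than to a separator between segments. A uniform barrier size would fail because arbitrarily long intervals straddling many such barriers would themselves have gap size equal to the common barrier gap. The crucial feature is the combination of pairwise distinct barrier sizes $r + k + 1$ (confining each large $d'$ to a bounded index range) with exponential spacing $2^k$ between barriers (ensuring that long intervals of small $\gs$ must begin arbitrarily far out, transferring lateness of the interval into lateness of the color index $l$).
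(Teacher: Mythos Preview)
Your proof is correct in both directions. The forward direction is essentially the paper's argument in a different dress: the paper picks, for each $n$, an interval $I_n$ of $X$ of size $n$ with $\gs(I_n)\leq d_0$, colors $n$ by $\gs(I_n)\in\{1,\ldots,d_0\}$, and invokes $\rt^1_{<\infty}$; you instead negate $(4)$ and apply $\mathsf{B}\Pi^0_1$ to the bounds $L$. Since $\rt^1_{<\infty}$ and $\mathsf{B}\Pi^0_1$ are two faces of $\bst$, the difference is cosmetic.

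The reversal is where you genuinely diverge. The paper's set $X$ is built from blocks $I_n$ of size $n$ in which \emph{every} internal gap equals $C(n)$, separated by a barrier of size $n$; thus a long interval with $\gs=d<r$ sits inside some $I_m$ and immediately yields $C(m)=d$ with $m$ large simply because $|I_m|=m$ is large. Your encoding instead places $c(n)$ at the single $n$-th gap, so a long interval with $\gs=d'$ only tells you that \emph{some} index $l$ inside it has $c(l)=d'-1$; to force $l$ to be large you need the ambient block to start far out, which is exactly what your exponential block sizes $2^k$ and strictly increasing barrier sizes $r+k+1$ buy. Both encodings live in $\rca^*$ (yours has $x_n=O(n)$ just as the paper's has $x_k=O(k^2)$), and both make each barrier size occur at most once so that no $d'>r$ can witness $(4)$. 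The paper's scheme is a little more economical---linear block growth suffices because the readback $m\mapsto C(m)$ is direct---while yours trades that for a one-to-one correspondence between gaps and values of $c$, at the cost of the exponential spacing you correctly identify as the crux.
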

\begin{proof}
We work in $\rca+\bst$ and assume $(1)$. Suppose $X$ is piecewise $d$-syndetic. For all $n$, search for an $n$-element interval $I_n$ of $X$ such that $\gs(I_n)\leq d$. Define a coloring $C\colon \Nb\to d+1$ by letting $C(n)=\gs(I_n)$. By $\rt^1_{<\infty}$ there exists $e\leq d$ such that $\{n\colon C(n)=e\}$ is infinite. Then $e$ is as desired.

We work in $\rca^*$ and assume $(1)\imp(4)$. We aim to show $\rt^1_{<\infty}$. Let $C\colon\Nb\to r$. For convenience we assume $C(n)>0$ for all $n$. 
We define  a piecewise $r$-syndetic set $X=\{x_0<x_1<\ldots\}\subseteq \Nb$ such that for every $n>0$ there is an interval $I_n$ of $X$ of size $n$ with $\gs(I_n)=C(n)< r$ and $\min(I_{n+1})-\max(I_n)=n$.
\[ 1\underbrace{1\overbrace{0\ldots\ldots\ldots0}^{C(2)-1\text{ times}} 1}_{|I_2|=2}0
\underbrace{1\overbrace{0\ldots\ldots\ldots0}^{C(3)-1 \text{ times}}1\overbrace{0\ldots\ldots\ldots0}^{C(3)-1 \text{ times}}1}_{|I_3|=3}001\ldots
\]
Let $x_0=0$, $x_1=x_0+1$ and $x_2=x_1+C(2)$, $x_3=x_2+2$, $x_4=x_3+C(3)$ and $x_5=x_4+C(3)$. In general, suppose we have defined $x_k$ and $k+1=n(n+1)/2=1+2+3+\ldots+n$. Then 
\[ \begin{split} x_{k+1}& =x_k+n; \\ 
               x_{k+i+1}& =x_{k+i}+C(n+1) \text{ for } 0<i<n+1. \end{split}\]
\sloppy Since $x_k\leq rk(k+1)/2$, we can define $x_k$ by bounded primitive recursion and  $X$ by $\Delta^0_1$-comprehension.
By construction, $I_n=\{x_k,x_{k+1},\ldots,x_{k+n-1}\}$, where $k=(n-1)n/2$, is an interval of $X$ of size $n$ and gaps bounded by $r$, and hence $X$ is piecewise $r$-syndetic.  By $(1)\imp(4)$, there exists $d$ such that $\gs(I)=d$ for arbitrarily long intervals $I$ of $X$. We claim that $d<r$ and $\{n\in\Nb\colon C(n)=d\}$ is infinite.  Suppose for a contradiction that $d\geq r$ or there exists $n$ such that $C(m)\neq d$ for all $m>n$. In both cases there exists $n>d$ such that $C(m)\neq d$ for all $m>n$.   Let $k+1=2+3+\ldots+n$. It follows that if $I$ is a $2$-element interval  of $X$ such that $\gs(I)=d$ then  $I\subseteq x_k+1$ and hence $|I|\leq x_k+2$, against our assumption on $d$.
\end{proof}

The reversal in the proof of Theorem \ref{d} does not work for $(1)\imp(2)$. In fact, if the set $\{x\in\Nb\colon C(x)=d\}$ is infinite, then for every multiple $e$ of $d$ there are arbitrarily large subsets $H$ of $X$ with $\gs(H)=e$. 
\begin{Question}
What is the strength of $(1)\imp(2)$? By Theorem \ref{d}, $(1)\imp(2)$ is provable in $\rca+\bst$. 
\end{Question}

\section{Brown's lemma vs van der Waerden's theorem}\label{section vdw}

The proof that every piecewise syndetic set contains arbitrarily long arithmetic progressions is based on the finite version of
van der Waerden's theorem and can be formalized in $\rca^*$.  
Therefore it is not surprising that within $\rca$  Brown's lemma implies van 
der \mbox{Waerden's} theorem. Here we 
establish this fact by showing that van der 
Waerden's theorem is equivalent to $\bst$ over $\rca^*$.

\begin{Theorem}
Over $\rca^*$,  the following are equivalent:
\begin{enumerate}[$(1)$]
 \item $\bst$;
 \item Van der Waerden's theorem;
 \item  Partition regularity of AP sets.
\end{enumerate}
\end{Theorem}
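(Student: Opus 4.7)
The plan is to establish the cycle $(3)\imp(2)\imp(1)\imp(3)$ over $\rca^*$. The implication $(3)\imp(2)$ is immediate: the set $\Nb$ is visibly AP, so viewing a finite coloring $C\colon\Nb\to r$ as a partition of $\Nb$ and applying $(3)$ yields an AP color class, which is exactly van der Waerden's theorem.

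For $(2)\imp(1)$ I would derive $\rt^1_{<\infty}$ and then invoke the theorem of Hirst and Yokoyama recalled in the preliminaries. Given $C\colon\Nb\to r$, van der Waerden's theorem supplies a color $c$ whose class contains arbitrarily long arithmetic progressions and is hence infinite; this establishes $\rt^1_{<\infty}$, and therefore $\bst$.

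The substantive direction is $(1)\imp(3)$. Since $\bst$ implies $\iso$ (the standard descent $\bs{n+1}\imp\is{n}$), over $\rca^*$ we have the full strength of $\rca$, and in particular the folklore theorem that the finite van der Waerden theorem is provable. Given an AP set $X$ and a partition $X=X_0\cup\ldots\cup X_{r-1}$, for each $l$ I would first search for an arithmetic progression $A\subseteq X$ of length $W(r,l)$---this search terminates because $X$ is AP and unbounded $\Sigma^0_1$-search is available from $\iso$---then apply the finite van der Waerden theorem to the $r$-coloring of $A$ induced by the partition to extract a monochromatic sub-AP of length $l$, and set $c_l<r$ to be its color. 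The resulting function $l\mapsto c_l$ is total and $\Delta^0_1$-definable. Applying $\rt^1_{<\infty}$ (equivalent to $\bst$), some $c<r$ satisfies $c_l=c$ for infinitely many $l$; for each such $l$ the class $X_c$ contains an AP of length $l$, so $X_c$ is AP, as $(3)$ requires.

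The main point to verify is that $l\mapsto c_l$ is a legitimate total $\Delta^0_1$-function within $\rca^*+\bst$: the search for $A$ uses $\Sigma^0_1$-search whose termination needs that $X$ is AP together with $\iso$, while the sub-AP inside $A$ is then located by bounded search using $W(r,l)$ as an explicit bound. I do not anticipate a deeper obstacle.
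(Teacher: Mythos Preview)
Your argument is correct. The implications $(3)\Rightarrow(2)$ and $(2)\Rightarrow(1)$ match the paper's exactly, and your $(1)\Rightarrow(3)$ is sound: with $\bst$ (hence $\iso$) you may form the total function $l\mapsto c_l$ by $\Sigma^0_1$-minimization and $\Delta^0_1$-comprehension, and then $\rt^1_{<\infty}$ gives a color $c$ occurring for unbounded $l$, so $X_c$ contains APs of every length.

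The paper, however, handles $(1)\Rightarrow(3)$ differently. Rather than building the auxiliary coloring $l\mapsto c_l$ and invoking $\rt^1_{<\infty}$, it argues by contradiction: if every $X_i$ fails to be AP, then for each $i<r$ there is some $l_i$ bounding the lengths of APs in $X_i$, and $\bst$ (in the form $\mathsf{B}\Pi^0_1$) yields a single $l$ that works for all $i$. One then takes a single AP in $X$ of length $W(r,l)$ and applies finite van der Waerden once to produce a monochromatic AP of length $l$, contradicting the choice of $l$. The paper's route is shorter and avoids the bookkeeping needed to certify that $l\mapsto c_l$ is a legitimate $\Delta^0_1$ function; your route is more constructive, giving the AP color directly rather than via reductio, at the cost of that extra verification. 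Both rely equally on the finite van der Waerden theorem being available in $\rca$.
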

\begin{proof}
We argue in $\rca^*$. Clearly $(3)\imp(2)$. As van der Waerden's theorem implies $\rt^1_{<\infty}$, which is equivalent to 
$\bst$, we have $(2)\imp(1)$. It remains to show $(1)\imp (3)$. Let $X=X_0\cup\ldots X_{r-1}$ be a finite partition of an $AP$ set. Suppose for a contradiction that  $X_i$ is not AP for all $i<r$. Then 
for all $i<r$ there exists $l$ such that no arithmetic 
progression of length $l$ lies within $X_i$. By $\bst$ there exists $l$ 
large enough such that for all $i<r$ no arithmetic progression of length $l$ lies within $X_i$. By the 
finite van der Waerden's theorem, let $n$ be such that every coloring $C\colon n\to r$ 
has a $C$-homogeneous arithmetic progression of length $l$. Let $\{x_0<x_1<\ldots< x_{n-1}\}\subseteq X$ be an arithmetic progression of length $n$. Define a coloring $C\colon n\to r$ be letting $C(m)=i$ iff $x_m\in X_i$. Then there exists a $C$-homogeneous arithmetic progression $m_0<m_1<\ldots m_{l-1}$ of length $l$.  It follows that $x_{m_0}<x_{m_1}<\ldots x_{m_{l-1}}$ is an arithmetic progression of length $l$ which lies entirely within $X_i$ for some $i<r$, for the desired contradiction.  
\end{proof}

Notice that we use the finite van der Waerden's theorem in the course of the proof. 

\begin{Corollary}
Over $\rca^*$, Brown's lemma implies  van der Waerden's theorem.
\end{Corollary}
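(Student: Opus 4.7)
The plan is to derive van der Waerden's theorem directly from Brown's lemma by exploiting the observation, noted at the opening of this section, that every piecewise syndetic set contains arbitrarily long arithmetic progressions, and that this implication is formalizable in $\rca^*$.

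Working in $\rca^*$ and assuming Brown's lemma, I would fix a finite coloring $C\colon\Nb\to r$, invoke Brown's lemma to obtain a $C$-homogeneous piecewise syndetic set $X\subseteq\Nb$, and then apply the piecewise-syndetic-implies-AP lemma to conclude that $X$ contains arbitrarily long arithmetic progressions. This is exactly the conclusion of van der Waerden's theorem in its ``restated'' form.

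The only step demanding a careful look is the auxiliary lemma that every piecewise syndetic set is AP. The standard argument runs as follows: given $X$ piecewise $d$-syndetic and a target length $l$, take a sufficiently long block $H\subseteq X$ of consecutive $X$-elements with gaps $\leq d$ sitting inside a short interval $I$; color each $i\in I$ by the binary string of length $d$ encoding the $X$-indicator on $[i,i+d-1]$; apply the finite van der Waerden's theorem with $2^d$ colors and length $l$ to extract a monochromatic arithmetic progression of positions of length $l$ inside $I$; and finally shift by a coordinate $j<d$ at which the common pattern equals $1$ (such a $j$ must exist because $X$ has gaps $\leq d$ on this block) to obtain an arithmetic progression of length $l$ lying entirely within $X$. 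The main obstacle, were one to push for full detail, is checking that this argument goes through in $\rca^*$ rather than in $\rca$, but this is precisely what the introduction to the section asserts, so I would not belabor it here.

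A shorter but less illuminating alternative is the indirect route: the abstract announces that Brown's lemma is equivalent to $\ist$ over $\rca^*$, so Brown's lemma implies $\bst$, whence the previous theorem delivers van der Waerden's theorem. I prefer the direct argument since it makes visible the content-level reason for the implication, namely that PS-homogeneity is intrinsically stronger than AP-homogeneity.
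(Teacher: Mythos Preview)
Your direct argument is correct: the paper explicitly notes at the start of this section that the implication ``piecewise syndetic $\Rightarrow$ AP'' can be formalized in $\rca^*$, so invoking Brown's lemma and then that implication is a perfectly good proof of the corollary.

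However, the paper actually takes the indirect route---the one you call ``shorter but less illuminating''---and does it slightly more economically than your version of it. Rather than appealing to the equivalence of Brown's lemma with $\ist$ (which is established only later, in Theorem~\ref{reversal}), the paper simply observes that Brown's lemma immediately implies $\rt^1_{<\infty}$ (a homogeneous piecewise syndetic set is in particular infinite), hence $\bst$, and then applies the previous theorem. This keeps the corollary logically prior to the harder reversal and avoids any forward reference. Your direct argument has the advantage of being self-contained and content-revealing; the paper's argument has the advantage of making the corollary an immediate consequence of the theorem just proved, which is presumably why it is placed here.
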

\begin{proof}
Over $\rca^*$ Brown's lemma implies $\rt^1_{<\infty}$, which is equivalent to $\bst$.
\end{proof}

\section{Brown's lemma}

We first show that Brown's lemma is provable in $\ist$. The argument is due to Kreuzer \cite{Kreuzer12}.

\begin{Lemma}[$\rca^*$]\label{base}
If $X\subseteq \Nb$ is $d$-syndetic and $X=X_0\cup X_1$ is a $2$-partition, then either 
each $X_i$ is piecewise $d$-syndetic or one of them is syndetic.
\end{Lemma}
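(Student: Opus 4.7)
The plan is to prove the contrapositive form: assume neither $X_0$ nor $X_1$ is syndetic, and show that both are piecewise $d$-syndetic. By symmetry it suffices to show, say, that $X_0$ is piecewise $d$-syndetic, i.e.\ that for every $n$ there is a finite $H \subseteq X_0$ with $|H| \geq n$ and $\gs(H) \leq d$.

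To produce such an $H$, I would first exploit the failure of syndeticity of $X_1$: for any prescribed $n$, the assumption that $X_1$ is not syndetic provides an interval $[a, a+nd]$ with $X_1 \cap [a, a+nd] = \emptyset$. Since $X = X_0 \cup X_1$, every element of $X$ lying in $[a, a+nd]$ must belong to $X_0$. Next I would use that $X$ itself is $d$-syndetic to locate many elements of $X$ in this interval: since the gap between consecutive elements of $X$ is at most $d$, one can walk from the first element of $X$ past $a$ through successive jumps of size at most $d$, obtaining a list $x_0 < x_1 < \dots < x_m$ of consecutive elements of $X$ all lying inside $[a, a+nd]$ with $m \geq n-1$. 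This finite set $H = \{x_0, \dots, x_m\}$ is contained in $X_0$, has at least $n$ elements, and inherits $\gs(H) \leq d$ from $X$. Since $n$ was arbitrary, $X_0$ is piecewise $d$-syndetic.

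The argument is essentially algorithmic: given $n$, we locate a witness by one search (for the disjoint-from-$X_1$ interval) followed by a bounded enumeration of $X$-elements inside that interval. Both steps are $\Delta^0_1$ operations relative to $X_0, X_1, X$, and the construction of $H$ from $n$ uses only bounded primitive recursion of length $\leq nd$ with values bounded by $a+nd$. So nothing beyond $\rca^*$ is required; in particular, no $\Sigma^0_2$-induction on the failure-of-syndeticity hypothesis is needed, since we simply apply the hypothesis once for each fixed $n$.

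The only subtle point will be stating the ``not syndetic'' hypothesis in a usable form inside $\rca^*$: ``$X_1$ is not syndetic'' unpacks to the $\Pi^0_2$ statement that for every $d'$ there is an interval of length $d'$ missing $X_1$. I would simply apply this instance with $d' = nd$ for each given $n$, which avoids collecting witnesses and hence avoids $\bst$. Thus the lemma is an immediate consequence of $d$-syndeticity of $X$ together with a pigeonhole-style observation, and the whole argument formalizes directly in $\rca^*$.
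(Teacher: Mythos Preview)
Your proposal is correct and follows essentially the same approach as the paper: from the failure of syndeticity of one piece you extract arbitrarily long intervals $I$ disjoint from it, observe that $X\cap I$ then lies in the other piece, and use $d$-syndeticity of $X$ to conclude that this other piece is piecewise $d$-syndetic. The paper's proof is a two-line version of exactly this argument (phrased with the roles of $X_0$ and $X_1$ swapped), while you have additionally spelled out the counting of elements of $X$ inside $I$ and the $\rca^*$-formalizability.
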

\begin{proof}
Suppose $X_0$ is not syndetic. This means that for arbitrarily long intervals $I$ of $\Nb$ we have $X_0\cap I=\emptyset$ and so $X\cap I=X_1\cap I$.  Therefore  $X_1$ is piecewise $d$-syndetic.
\end{proof}

\begin{Theorem}\label{forward}
Over $\rca$, $\ist$ implies Brown's Lemma.
\end{Theorem}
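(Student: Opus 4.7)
The plan is to argue by contradiction. Suppose no color class $X_i := C^{-1}(i)$ is piecewise syndetic, for $i < r$. The strategy is to peel off the colors one at a time, maintaining as invariant that the remaining set $S_k := \Nb \setminus \bigcup_{i < k} X_i$ is syndetic with an explicit witness $d_k$; since $S_r = \emptyset$ cannot be syndetic, this will yield a contradiction at stage $k = r$.

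First, still in $\rca^*$, I would establish a quantitative sharpening of Lemma~\ref{base}: if $X$ is $d$-syndetic, $X = X_0 \cup X_1$, and no $n$-element subset of $X_0$ has gaps $\leq d$, then $X_1$ is $nd$-syndetic. The reason is that every interval of length $nd$ in $\Nb$ meets $X$ in at least $n$ elements (split the interval into $n$ blocks of length $d$; each contains an $X$-element by $d$-syndeticness of $X$), and consecutive $X$-elements inside the interval differ by at most $d$, so the first $n$ of them form an $n$-set with gaps $\leq d$; by hypothesis at least one must lie in $X_1$.

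Next, using the $\Sigma^0_2$ least-number principle (available in $\ist$), for each pair $(i,d)$ I set $n(i,d)$ to be the least $n$ such that no $n$-element subset of $X_i$ has gaps $\leq d$; this is well-defined by the contradiction assumption. Then I set $d_0 = 1$ and $d_{k+1} = d_k \cdot n(k, d_k)$, obtaining the finite sequence $(d_k)_{k \leq r}$. The statement ``$S_k$ is $d_k$-syndetic''---with $d_k$ a fixed parameter---is $\Pi^0_2$, so I prove it by $\Pi^0_2$-induction on $k \leq r$ (equivalent to $\ist$). The base $S_0 = \Nb$ is $1$-syndetic; the step applies the quantitative lemma to the $2$-partition $S_k = X_k \cup S_{k+1}$, noting that $X_k$ has no $n(k, d_k)$-element subset with gaps $\leq d_k$. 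At $k = r$ we obtain that $S_r = \emptyset$ is $d_r$-syndetic, the desired absurdity.

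The main obstacle is keeping the logical complexity inside $\ist$. If one only kept the bare invariant ``$S_k$ is syndetic'', without an explicit witness, the inductive statement would be $\Sigma^0_3$ and would require $\mathsf{I}\Sigma^0_3$, strictly stronger than $\ist$. The effective recursion $d_{k+1} = d_k \cdot n(k, d_k)$ lowers the induction to $\Pi^0_2$, while extracting the witnesses $n(i,d)$ relies on $\Sigma^0_2$-minimization; both ingredients sit within $\ist$, and the remainder of the reasoning (the quantitative strengthening of Lemma~\ref{base} and the verification of the inductive step) is already in $\rca^*$.
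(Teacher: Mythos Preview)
Your argument is correct, but the paper's proof is considerably shorter and takes a different route. Rather than arguing by contradiction and tracking explicit syndeticity constants, the paper invokes bounded $\Sigma^0_2$-comprehension (equivalent to $\ist$ over $\rca$) to form the finite collection
\[
I=\bigl\{A\subseteq r: \textstyle\bigcup_{i\in A}X_i \text{ is syndetic}\bigr\},
\]
picks a $\subseteq$-minimal $A\in I$ (nonempty since $r\in I$), fixes $d$ witnessing that $\bigcup_{i\in A}X_i$ is $d$-syndetic, and applies the \emph{unsharpened} Lemma~\ref{base} once to the $2$-partition $\bigcup_{i\in A}X_i = X_i\cup\bigcup_{j\in A\setminus\{i\}}X_j$: since the second piece is not syndetic by minimality of $A$, $X_i$ must be piecewise $d$-syndetic. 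No recursion, no quantitative strengthening.

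Your final paragraph also contains a complexity miscount that makes your detour look obligatory when it is not. The predicate ``$S_k$ is syndetic'' is $\Sigma^0_2$, not $\Sigma^0_3$: it reads $(\exists d)(\forall a)(\exists b\leq a+d)(a\leq b\wedge b\in S_k)$, and the inner existential is bounded, so ``$d$-syndetic'' is already $\Pi^0_1$. Hence $\Sigma^0_2$-induction on the bare invariant, together with the original Lemma~\ref{base}, already yields the contradiction --- and that is essentially the paper's minimal-element argument rephrased as an induction. (Likewise ``$S_k$ is $d_k$-syndetic'' with $d_k$ a fixed parameter is $\Pi^0_1$, not $\Pi^0_2$.) What your approach does buy is an explicit syndeticity bound computed from the failure witnesses $n(i,d)$; for the bare implication this extra bookkeeping is unnecessary.
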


\begin{proof}
Let $C\colon\Nb\to r$ be a finite coloring. By bounded $\Sigma^0_2$-comprehension let
\[    I=\{A\subseteq r\colon \bigcup_{i\in A}\{x\in\Nb\colon C(x)= i\} \text{ is 
syndetic}\}.   \]
$I$ is nonempty as $r\in I$. Let $A\in I$ be minimal (w.r.t.\ $\subseteq$). Notice that 
$A\neq\emptyset$.  Suppose that the union is $d$-syndetic. By Lemma \ref{base} and by the minimality of $I$, for 
every $i\in A$ the set $\{x\in\Nb\colon C(x)= i\}$ must be piecewise $d$-syndetic. 
\end{proof}

Note that the proof of Theorem \ref{forward} does not generalize to partitions of piecewise syndetic sets. However we have the following:

\begin{Theorem}\label{regular PS}
Over $\rca^*$, the following are equivalent:
\begin{enumerate}[$(1)$]
 \item Brown's lemma;
 \item Partition regularity of piecewise syndetic sets.
\end{enumerate}
\end{Theorem}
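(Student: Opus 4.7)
The backward direction, $(2)\Rightarrow(1)$, is immediate: take $X=\Nb$, which is piecewise $1$-syndetic, and apply $(2)$ to the $r$-partition $\Nb=C^{-1}(0)\cup\cdots\cup C^{-1}(r-1)$ induced by the given coloring.

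For the forward direction, $(1)\Rightarrow(2)$, I plan to route through $\ist$. The abstract advertises Brown's lemma as equivalent to $\ist$ over $\rca^*$; Theorem~\ref{forward} supplies $\ist\Rightarrow(1)$, and the reversal $(1)\Rightarrow\ist$ is presumably established in a later section (by an argument showing that Brown's lemma already delivers $\Sigma^0_2$-induction). Granting this bridge, I work in $\ist$ and mimic the bounded $\Sigma^0_2$-comprehension argument of Theorem~\ref{forward}. Given $X$ piecewise $d$-syndetic with $X=X_0\cup\cdots\cup X_{r-1}$, I form
\[ I = \bigl\{A\subseteq r : \textstyle\bigcup_{i\in A} X_i \text{ is piecewise syndetic}\bigr\}, \]
observe $r\in I$, choose a minimal $A\in I$ (which is nonempty), and try to show that every $X_{i_0}$ with $i_0\in A$ is piecewise syndetic.

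Using the minimality of $A$, this last step reduces to a "piecewise syndetic" analog of Lemma~\ref{base}: \emph{if $Z$ is piecewise $d$-syndetic, $Z=Z_0\cup Z_1$, and $Z_0$ is not piecewise syndetic, then $Z_1$ is piecewise syndetic.} To establish this key lemma, I would inspect arbitrarily large chunks $H\subseteq Z$ with $\gs(H)\leq d$, split $H=H_0\cup H_1$ by $Z_i$-membership, and case-split on a parameter $d'$. In sub-case~(i) with $|H_0|\leq 1$, $H_1$ has $\geq|H|-1$ elements with gap $\leq 2d$. In sub-case~(ii) with $|H_0|\geq 2$ and $\gs(H_0)>d'$, a consecutive $H_0$-pair of value-gap exceeding $d'$ forces a run of $H_1$-elements of size $>d'/d-1$ with gap $\leq d$. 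In sub-case~(iii) with $|H_0|\geq 2$ and $\gs(H_0)\leq d'$, the failure of $Z_0$ to be piecewise $d'$-syndetic provides a bound $N_{d'}$ with $|H_0|<N_{d'}$, so $|H_1|\geq|H|-N_{d'}$ and $\gs(H_1)\leq N_{d'}\cdot d$. A dichotomy then decides whether sub-case~(iii) persists cofinally for some fixed $d'$ (yielding $Z_1$ piecewise $N_{d'}d$-syndetic) or, for every $d'$, fails for arbitrarily large $H$ (in which case setting $d'=(m+1)d$ for arbitrary $m$ forces sufficiently large $H$ into sub-case (i) or (ii), yielding $Z_1$ piecewise $2d$-syndetic).

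The principal obstacle is the formal articulation of the key lemma in $\ist$: the map $d'\mapsto N_{d'}$ is $\Sigma^0_2$-definable, the dichotomy on the persistence of sub-case~(iii) is itself $\Sigma^0_2$ in nature, and the bounded $\Sigma^0_2$-comprehension used to form $I$ in the outer argument is of the same flavor, so the whole analysis genuinely needs $\Sigma^0_2$-induction or comprehension beyond $\rca^*$. The entire plan therefore rests on the bridge Brown$\Rightarrow\ist$; without it one would need a direct combinatorial reduction from $(1)$ to $(2)$, and I do not see one avoiding this detour.
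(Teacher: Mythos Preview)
Your detour through $\ist$ is not how the paper proceeds, and more importantly your specific implementation has a complexity error. You form
\[
I=\bigl\{A\subseteq r:\textstyle\bigcup_{i\in A}X_i\text{ is piecewise syndetic}\bigr\}
\]
and call this ``bounded $\Sigma^0_2$-comprehension of the same flavor'' as in Theorem~\ref{forward}. But ``syndetic'' is $\Sigma^0_2$ (an existential over a $\Pi^0_1$ condition), whereas ``piecewise syndetic'' is $\Sigma^0_3$ (an existential over the $\Pi^0_2$ condition ``for all $n$ there is an $n$-element subset with gaps $\leq d$''); the paper itself notes this $\Sigma^0_3$ complexity in the introduction. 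Forming your $I$ therefore needs bounded $\Sigma^0_3$-comprehension, i.e.\ $\mathsf{I}\Sigma^0_3$, not $\ist$. This is exactly why the paper remarks, immediately before the theorem, that the proof of Theorem~\ref{forward} does \emph{not} generalize to partitions of piecewise syndetic sets: you cannot simply swap ``syndetic'' for ``piecewise syndetic'' in that argument and stay within $\ist$.

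The paper instead gives the direct combinatorial reduction you say you do not see. Working in $\rca^*$ plus Brown's lemma (which already yields $\iso$), one enumerates a piecewise $d$-syndetic subset $\{x_0<x_1<\ldots\}$ of $X$ arranged in consecutive blocks $I_n$ of size $n$ with $\gs(I_n)\leq d$; this is bounded primitive recursion. One then colors $\Nb$ by $C(n)=i\iff x_n\in X_i$, applies Brown's lemma to $C$ to obtain a $C$-homogeneous piecewise $e$-syndetic set $Y\subseteq\Nb$ of color $i$, and shows that $\{x_n:n\in Y\}\subseteq X_i$ is piecewise $ed$-syndetic by a finitary pigeonhole count inside the blocks $I_m$. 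No appeal to $\ist$, no $\Sigma^0_3$ comprehension, and no analog of Lemma~\ref{base} is needed; the whole argument is a single application of Brown's lemma to an auxiliary coloring.
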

\begin{proof}
We argue in $\rca^*$ and assume Brown's lemma. In particular we have $\Sigma^0_1$-induction. 
Let $X=X_0\cup X_1\ldots\cup X_{r-1}$ be a finite partition of $X$ and suppose that $X$ is piecewise $d$-syndetic.  By primitive recursion we can define a piecewise $d$-syndetic subset $\{x_0<x_1<x_2<\ldots\}$ of $X$ such that for every $n>0$ the $n$-element subset $I_n=\{x_k,x_{k+1},x_{k+2},\ldots,x_{k+n-1}\}\subseteq X$ has gaps bounded by $d$, where $k=1+2+\ldots+(n-1)$. Define $C\colon \Nb\to r$ by letting $C(n)=i$ iff $x_n\in X_i$. By 
Brown's Lemma, there exists a $C$-homogeneous piecewise syndetic set $Y$. Say $Y$ is 
piecewise $e$-syndetic and homogeneous for color $i$. 

We claim  that $Z=\{x_n\colon n\in Y\}\subseteq X_i$ is piecewise $e\cdot d$-syndetic.  To this end it is enough to show that for every $n$ there exists an $n$-element subset $A$ of $Y$ such that $\gs(A)\leq e$ and $\{x_n\colon n\in A\}\subseteq I_m$ for some $m$. Let $n>0$ be given. Set $k=1+2+\ldots+(2ne-1)$ so that $x_k\in B_{2ne}$. Now consider $p=k+2n$.  As $Y$ is piecewise $e$-syndetic, there exists a $p$-element subset $\{i_0<\ldots<i_{p-1}\}$ of $Y$ with gaps bounded by $e$. Let $A=\{i_k<i_{k+1}<\ldots<i_{p-1}\}$. Notice that $|A|=2n$ and $\gs(A)\leq e$. Since  $i_k\geq k$ we have that $x_{i_k}\in I_m$ with $|I_m|\geq 2ne$. As $\gs(A)\leq e$ we have that $i_{p-1}-i_k\leq 2ne$ and so $|\{x_i\colon i_k\leq i\leq i_p\}|\leq 2ne+1$. It follows that  $\{x_i \colon i_k\leq i\leq i_p\}\subseteq I_m\cup I_{m+1}$. This ensures that $\{x_i\colon i\in A\}\subseteq I_m\cup I_{m+1}$. Then either the first $n$ elements are in $I_m$ or the last $n$ elements are in $I_{m+1}$. In both cases we are done.
\end{proof}

We now turn to the reversal. Actually we show that the following weak version of Brown's lemma already implies $\ist$ over $\rca^*$.

\begin{Theorem}[Weak Brown's Lemma]
For every coloring $C\colon\Nb\to r$ there exists $d\in\Nb$ such that  $\gs(H)\leq d$ 
for arbitrarily large $C$-homogeneous sets $H$.
\end{Theorem}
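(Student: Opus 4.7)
The plan is to deduce Weak Brown's Lemma as an immediate corollary of the (strong) Brown's Lemma, which was already established over $\rca$ from $\ist$ in Theorem \ref{forward}. Given a finite coloring $C\colon\Nb\to r$, Brown's Lemma produces a color $i<r$ such that $X_i=\{x\in\Nb:C(x)=i\}$ is $C$-homogeneous and piecewise $d$-syndetic for some $d\in\Nb$. Unpacking the definition of piecewise $d$-syndetic, for every $n$ the set $X_i$ contains a finite subset $H$ of size at least $n$ with $\gs(H)\leq d$. Each such $H$ is monochromatic of color $i$, so the very same $d$ witnesses Weak Brown's Lemma for $C$.

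The only bookkeeping check I would perform is that the conclusion ``there exists $d$ such that $\gs(H)\leq d$ for arbitrarily large $C$-homogeneous sets $H$'' unpacks in $\rca^*$ as $\exists d\,\forall n\,\exists H\,(|H|\geq n\wedge \gs(H)\leq d\wedge H\text{ is $C$-homogeneous})$, which is exactly the shape delivered by the piecewise syndetic witness for $X_i$ coming out of Brown's Lemma. No induction beyond what Theorem \ref{forward} already consumes is required, and the witnessing $H$ for each $n$ can be obtained as the first finite subset of $X_i\cap[0,N]$ of size $n$ and gap size at most $d$ for a suitable $N$, which is $\Delta^0_1$-definable from $C$, $d$, and $n$.

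The main point is simply that Weak Brown's Lemma is literally a weakening of Brown's Lemma: the strong form fixes a single color $i$ together with the gap bound $d$, whereas the weak form permits the homogeneous color of $H$ to vary with $|H|$. Consequently the weak form is provable in every subsystem in which the strong form holds, in particular in $\rca+\ist$, and no new combinatorial content is required at this step. The genuinely interesting direction—that Weak Brown's Lemma conversely implies $\ist$ over $\rca^*$—is a separate reversal and lies beyond the content of the theorem statement itself; that direction is where the real work will be, and it is what motivates isolating this weak form in the first place.
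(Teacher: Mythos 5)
Your derivation is correct and matches the paper's treatment: the paper never proves Weak Brown's Lemma separately but folds it into Theorem \ref{reversal} as the (stated-as-obvious) implication from Brown's Lemma, which is exactly the unwinding of ``piecewise $d$-syndetic'' that you carry out. Your closing observation is also on target --- the substance lies entirely in the reversal to $\ist$, which the paper handles via the diagonalization function of Lemma \ref{diagonalization}.
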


\begin{Theorem}\label{reversal} Over $\rca^*$, the following are equivalent:
\begin{enumerate}[$(1)$]
 \item $\ist$;
 \item Brown's lemma;
 \item Partition regularity of piecewise syndetic sets;
 \item Weak Brown's lemma.
\end{enumerate}
\end{Theorem}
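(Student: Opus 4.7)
Three of the four remaining implications are essentially free: Theorem~\ref{forward} gives $(1)\Rightarrow(2)$, Theorem~\ref{regular PS} gives $(2)\Leftrightarrow(3)$, and $(2)\Rightarrow(4)$ is immediate since a piecewise $d$-syndetic $C$-homogeneous set supplies arbitrarily large finite $C$-homogeneous subsets with gaps bounded by $d$. So it remains to show $(4)\Rightarrow(1)$, i.e.\ Weak Brown's lemma implies $\ist$ over $\rca^*$.

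My plan first extracts $\bst$ from (4). Given a coloring $C\colon\Nb\to r$, Weak Brown's lemma produces arbitrarily large $C$-homogeneous finite sets, and a pigeonhole on the $r$ possible colors (available under $\Sigma^0_0$-induction) isolates a recurring color whose class is unbounded. This gives $\rt^1_{<\infty}$ and hence $\bst$ by Hirst's theorem, so we have $\iso$ and may work in $\rca+\bst$. The same pigeonhole step also upgrades Weak Brown's lemma to the full Brown's lemma, since the recurring color yields a homogeneous piecewise $d$-syndetic set.

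Assume now for contradiction that $\ist$ fails. Since $\ist$ over $\rca+\bst$ is equivalent to bounded $\Sigma^0_2$-comprehension, fix a $\Sigma^0_2$ formula $\varphi(x) = \exists y\,\forall z\,\theta(x,y,z)$ (with $\theta\in\Sigma^0_0$) and a bound $a$ such that $\{x\leq a : \varphi(x)\}$ does not exist as a set. Form the $\Sigma^0_0$-definable approximation $R_n = \{x\leq a : \exists y\leq n\,\forall z\leq n\,\theta(x,y,z)\}$ and the coloring $C\colon\Nb\to 2^{a+1}$ with $C(n) = R_n$, possibly refined to also record the transition $R_n\triangle R_{n-1}$ as a second component, so the coloring homogenizes membership and jump data simultaneously. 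Apply Brown's lemma to get $d$ and a value $R^*$ so that the $\Delta^0_1$-set $H = \{n : C(n) = R^*\}$ is piecewise $d$-syndetic. The target claim is $R^* = \{x\leq a : \varphi(x)\}$, which produces the forbidden set as an actual $\Delta^0_1$-object and yields the contradiction.

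The inclusion $\{x : \varphi(x)\}\cap[0,a]\subseteq R^*$ is routine: a witness $y^*$ of $\varphi(x)$ forces $x\in R_n$ for every $n\geq y^*$, and $H$ is unbounded. The reverse inclusion is where the argument has to do real work. Suppose $x\in R^*$ with $\neg\varphi(x)$: the least witness $y_n^x = \mu y\leq n\,\forall z\leq n\,\theta(x,y,z)$ is defined and nondecreasing on $H$, and must be unbounded on $H$ (a bounded sequence would, by pigeonhole, concentrate on a fixed $y^*$ witnessing $\varphi(x)$). The killer function $K_x(y) = \mu z\,\neg\theta(x,y,z)$ is total by $\neg\varphi(x)$ and, by $\bst$, bounded on every initial interval. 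The plan is then to exploit the uniform gap constant $d$ from Weak Brown's lemma — precisely the feature absent from van der Waerden's theorem, which yields only $\bst$ — so that the jumps of $y_n^x$ on a long piecewise $d$-syndetic window reveal more distinct values of $K_x$ on a bounded initial segment than the $\bst$-bound allows, giving the desired contradiction. Closing this counting cleanly, and choosing the right jump-tracking refinement of $C$ so that the piecewise syndetic set $H$ controls both $R_n$ and its transitions, is the main technical obstacle.
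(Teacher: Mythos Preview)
Your reductions $(1)\Rightarrow(2)$, $(2)\Leftrightarrow(3)$, and $(2)\Rightarrow(4)$ are fine and match the paper. The problem is entirely in $(4)\Rightarrow(1)$, and here your proposal is genuinely incomplete: you yourself flag the reverse inclusion $R^*\subseteq\{x\leq a:\varphi(x)\}$ as ``the main technical obstacle,'' and the sketch you give does not close. The difficulty is that nothing in your coloring $C(n)=R_n$ (or even a refinement recording $R_n\triangle R_{n-1}$) ties the gap bound $d$ to the growth of the least-witness function $y_n^x$. On a long window of $H$ with gaps $\leq d$, the sequence $y_n^x$ may simply fail to jump at all, so there is no counting contradiction to extract; conversely, when it does jump, the gap bound $d$ constrains only where $K_x(y_n^x)$ lands, not how many distinct values appear on a bounded segment. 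The ``jump-tracking refinement'' you allude to would have to force jumps to occur, and there is no mechanism in your setup for that.

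The paper's argument is quite different and avoids this trap by baking a diagonalization into the coloring itself. Lemma~\ref{diagonalization} supplies a uniform family $D(d,\cdot)\colon\Nb\to 2$ such that \emph{no} $D(d,\cdot)$-homogeneous set of gap $\leq d$ has more than $d$ elements. Given the $\Pi^0_1$-formula $\forall z\,\theta(x,y,z)$, one sets $f(x,s)$ to be the least apparent witness $y\leq s$ at stage $s$, and colors $C(s)=\langle D(f(x,s),s):x<a\rangle\in 2^a$. Weak Brown's lemma yields a single $d$ with arbitrarily large $C$-homogeneous sets of gap $\leq d$. If $f(x)=\lim_s f(x,s)$ exists, then past the settling stage every such homogeneous set is also $D(f(x),\cdot)$-homogeneous with gap $\leq d$; the diagonalization lemma then forces $f(x)<d$. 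Thus $d$ uniformly bounds all existing $\Pi^0_1$-witnesses, which is exactly $\mathsf{S}\Pi^0_1\equiv\ist$. The same construction with a $\Sigma^0_0$ matrix first yields $\mathsf{S}\Sigma^0_0\equiv\iso$, so the whole argument runs over $\rca^*$ without the detour through $\bst$ you attempt. The point you are missing is that the gap bound $d$ must be \emph{pitted against} something that provably defeats any fixed $d$; the approximation coloring $R_n$ carries no such adversary, whereas $D$ does.
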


We need the following diagonalization lemma. 

\begin{Lemma}[$\rca^*$]\label{diagonalization}
There exists a function $D\colon\Nb\times\Nb\to 2$ such that 
for all 
$d$ the $2$-coloring $D(d,\cdot)$ has no arbitrarily large $C$-homogeneous 
sets $H$ with $\gs(H)\leq d$.
\end{Lemma}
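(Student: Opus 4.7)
The plan is to define $D$ by an explicit elementary formula and then verify the required bound by a short block-counting argument. Concretely, I would set
\[
D(d, n) = \floor{n/(d+1)} \bmod 2.
\]
This is elementary recursive in both arguments, so $\Delta^0_1$-comprehension in $\rca^*$ yields $D \colon \Nb \times \Nb \to 2$. For each fixed $d$, the slice $D(d, \cdot)$ partitions $\Nb$ into consecutive blocks $B_k = [k(d+1), (k+1)(d+1))$ of length $d + 1$, with the blocks alternately colored $0$ and $1$.

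The key step is to show that, for every $d$, any $D(d, \cdot)$-homogeneous set $H$ with $\gs(H) \leq d$ satisfies $|H| \leq d + 1$. The reason is that two same-colored blocks are separated by an entire block of the opposite color of length $d + 1$: if $h_i < h_{i+1}$ are consecutive elements of a homogeneous $H$ lying in distinct same-colored blocks $B_{2k}$ and $B_{2k'}$ with $k' > k$, then a direct calculation gives $h_{i+1} - h_i \geq d + 2$, contradicting $\gs(H) \leq d$. Hence all of $H$ lies in a single block, yielding $|H| \leq d + 1$. This is in fact substantially stronger than what the lemma requires.

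I do not expect any real obstacle. The entire argument uses only bounded arithmetic: the definition of $D$ is $\Delta^0_0$, and the verification of the bound is a finite calculation, so everything goes through in $\rca^*$ with $\Sigma^0_0$-induction. The only subtlety worth flagging is that the block length must be strictly greater than $d$ (hence the choice $d + 1$ rather than $d$) so that a step of length at most $d$ cannot cross from one same-colored block to the next. This uniform, elementary-in-$d$ bound is exactly what will be needed downstream, when $D$ is combined with a failure of $\ist$ to produce a single coloring witnessing the negation of weak Brown's lemma.
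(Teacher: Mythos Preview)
Your proposal is correct and takes essentially the same approach as the paper: define $D(d,\cdot)$ by alternating monochromatic blocks and observe that a homogeneous set with gaps $\leq d$ cannot jump over an opposite-colored block, hence lies in a single block. The paper uses block length $d$ (so $D(d,x)=\lfloor x/d\rfloor\bmod 2$ for $d>0$, giving $|H|\leq d$) and verifies the containment by $\Sigma^0_0$-induction on the index of $H$, whereas you use block length $d+1$ and a direct gap computation; these are interchangeable variants, with your choice having the minor cosmetic advantage of being defined at $d=0$.
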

\begin{proof}
For all $d>0$ let  
\[   D(d,\cdot)=\overbrace{0\ldots 0}^{d \text{ times}}\overbrace{1\ldots 1}^{d \text{ times}}\overbrace{0\ldots 0}^{d \text{ times}}\overbrace{1\ldots 1}^{d \text{ times}}\ldots  \]
We define $D$ by $D(d,x)=\floor{\frac{x}{d}}$ (mod $2$).  Fix $d>0$ and let $C(x)=D(d,x)$. Suppose $H$ is 
$C$-homogeneous with gaps bounded by $d$. We claim that  $|H|\leq d$.  Let
$H=\{x_0<x_1<\ldots < x_l\}$ and $m=\floor{\frac{x_0}{d}}$ so that $md\leq x_0<(m+1)d$. We show by induction that $x_l<(m+1)d$ for all $l$.
The case $l=0$ is true. Suppose $x_l<(m+1)d$.  Since $x_{l+1}-x_l\leq d$ we have that $x_{l+1}<(m+2)d$. If $(m+1)d\leq x_{l+1}$ then $x_{l+1}/d=m+1$ and $C(x_{l+1})=m+1$ (mod $2$) $\neq m$ (mod $2$) $=C(x_l)$, against $H$ being homogeneous. Therefore $H\subseteq [md,(m+1)d)$ and hence $|H|\leq d$.
\end{proof}

\begin{proof}[Proof of Theorem \ref{reversal}]
 Implication $(1)\imp(2)$  is Theorem \ref{forward} and $(2)\biimp(3)$ is Theorem \ref{regular PS}.
Clearly $(2)\imp(3)$. It remains to show $(3)\imp (1)$. 

Within $\mathsf{I}\Sigma^0_{0}$, one can show that $\mathsf{I}\Sigma^0_{n+1}$ is equivalent to $\mathsf{S}\Pi^0_n$, strong collection for $\Pi^0_n$-formulas (see \cite[Ch.\ I Sect.\ 2(b)]{HajPud93}). We first assume $\mathsf{I}\Sigma^0_{1}$ and  prove 
$\mathsf{S}\Pi^0_1$, that is:
\[   (\forall a)(\exists d)(\forall x<a)\big(\exists y\forall z\theta(x,y,z)\rightarrow (\exists 
y<d)(\forall z)\theta(x,y,z)\big), \]
where $\theta$ is $\Sigma^0_0$. Let $f(x,s)$ be the 
least $y\leq s$ such that $(\forall z<s)\theta(x,y,z)$, and $s$ if there is no such a $y$. By $\Sigma^0_1$-induction one can show that 
if $(\exists y)(\forall z)\theta(x,y,z)$  then $f(x)=\lim_{s\to\infty}f(x,s)=$ the 
least $y$ such that $(\forall z)\theta(x,y,z)$.  

Define a coloring $C\colon \Nb\to 2^a$ as follows. Let 
$D\colon\Nb\times\Nb\to 2$ be the function of Lemma \ref{diagonalization} and set 
$C(s)=\langle D(f(x,s),s)\colon x<a\rangle$. By $(3)$ there exists $d$ such that  $\gs(H)\leq d$ 
for arbitrarily large $C$-homogeneous sets $H$. We claim that $d$ is as desired.  
Let $x<a$  and suppose that $f(x)$ exists. Fix $s$ such that $f(x,t)=f(x)$ for all $t>s$. We aim to prove that $D(f(x),\cdot)$ has arbritrarily large homogeneous sets $H$ with $\gs(H)\leq d$ and therefore it must be $f(x)<d$. Let $k$ be given. By the assumption, there exists a $C$-homogeneous set $G$ of size $s+k+1$ and $\gs(G)\leq d$. Let $H$ consist of the last $k$ elements of $G$. Then $|H|=k$, $\gs(H)\leq d$ and $s<t$ for all $t\in H$. Suppose $G$ is $C$-homogeneous for color $i$. Then for all $t\in H$ we have that $i(x)=C(t)(x)=D(f(x,t),t)=D(f(x),t)$ and hence $H$ is homogeneous for $D(f(x),\cdot)$.

It remains to show that $(3)$ implies $\mathsf{I}\Sigma^0_1$ over $\rca^*$. We can apply the argument above to prove $\mathsf{S}\Pi^0_1$, that is:
\[   (\forall a)(\exists d)(\forall x<a)\big(\exists y\theta(x,y,z)\rightarrow (\exists 
y<d)\theta(x,y,z)\big), \]
where $\theta$ is $\Sigma^0_0$. Define $f(x,s)$ to be the least $y<s$ such that $\theta(x,y)$ if $y$ exists and $s$ otherwise. Now $\Sigma^0_0$-induction is sufficient to show that if $\exists y\theta(x,y)$ then $f(x)=\lim_{s\to\infty}f(x,s)=$ the 
least $y$ such that $\theta(x,y)$. Define $C$ as before by using $D$ and show that $d$ is as desired.  
\end{proof}

\section{Finite Brown's Lemma}

\begin{Theorem}[Brown's Lemma, finite]
Let $f\colon\Nb\to\Nb$. Then for all $r>0$ there exists $n$ such that every coloring
$C\colon n\to r$ has a $C$-homogeneous set $H$ with $|H|> f(\gs(H))$.
\end{Theorem}
\begin{proof}
From Brown's Lemma by using K\"{o}nig's Lemma.
\end{proof}

\begin{Theorem}\label{finite brown}
The finite version of Brown's Lemma is provable in $\rca$.
\end{Theorem}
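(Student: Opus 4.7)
The plan is a direct recursive proof by induction on the number of colors $r$, defining a primitive recursive witness $n(r,f)$ and bypassing both K\"onig's lemma and the infinite Brown's lemma (each of which is out of reach in $\rca$). Setting $f^{*}(d) = \max_{e \leq d} f(e)$, I let $n(1,f) = f(1) + 1$ and, for $r \geq 2$,
\[
n(r,f) = n(r-1,f) \cdot \bigl(f^{*}(n(r-1,f)) + 1\bigr).
\]
The claim is that $n(r,f)$ witnesses $\bl_f$ for $r$-colorings.

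For the inductive step I fix $C \colon n(r,f) \to r$ and $D = n(r-1,f)$, and I split on whether color $r-1$ appears in every length-$D$ subinterval of $[0, n(r,f))$. If not, some length-$D$ interval $I$ is disjoint from $C^{-1}(r-1)$, so $C|_I$ is effectively an $(r-1)$-coloring of an interval of length $D = n(r-1,f)$; after a harmless shift, the inductive hypothesis supplies the required $H \subseteq I$. If so, let $X = C^{-1}(r-1) \cap [0, n(r,f))$: any two consecutive elements of $X$ lie within $D$ of each other (else the intervening gap would contain a length-$D$ subinterval avoiding color $r-1$), so $\gs(X) \leq D$; meanwhile, partitioning $[0, n(r,f))$ into $f^{*}(D) + 1$ disjoint length-$D$ intervals, each of which must contain a member of $X$, gives $|X| \geq f^{*}(D) + 1 > f^{*}(D) \geq f(\gs(X))$. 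The base case $r = 1$ is immediate, with $H = [0, f(1)+1)$ giving $\gs(H) = 1$ and $|H| = f(1) + 1$.

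For the formalization, the recursion defining $n(r,f)$ is primitive recursive with $f$ as a function parameter, which $\rca = \rca^* + \iso$ supports. The case distinction in the inductive step is bounded (hence decidable already in $\rca^*$), and the verification ``every $r$-coloring of $n(r,f)$ admits a homogeneous $H$ with $|H| > f(\gs(H))$'' is bounded in the parameters, so a standard induction on $r$ within $\iso$ closes the argument. No step seems to be a genuine obstacle; the only point deserving care is the densification observation in the second case, which is a direct pigeonhole but carries all the combinatorial content. As a sanity check, for $f(d) = 2^d$ the recursion produces a tower of exponentials in $r$, a function not dominated by any elementary function, which is consistent with the paper's forthcoming claim that $\bl_f$ fails in $\efa$ for such $f$.
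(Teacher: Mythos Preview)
Your proof is correct and follows essentially the same approach as the paper: induction on $r$, replacing $f$ by a nondecreasing variant, and a case split on whether some color avoids an interval of length $n(r-1,f)$ (reduce to fewer colors) versus all colors being dense (pigeonhole/counting finishes). The only cosmetic difference is that you single out one color and directly exhibit the homogeneous set in the dense case, whereas the paper treats all colors symmetrically and derives a contradiction from $n_{r+1}=\sum_i |H_i|\leq (r+1)f(n_r)$; the resulting bounds $n(r,f)=n(r-1,f)\cdot(f^{*}(n(r-1,f))+1)$ versus $n_{r+1}=(r+1)f(n_r)+1$ differ accordingly but are of the same order.
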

\begin{proof}
The proof of  \cite[Theorem 10.33]{LanRob04} can be formalized in $\rca$. Let 
$f\colon\Nb\to\Nb$ be given. Without loss of generality we may assume that $f$ is nondecreasing, for otherwise we can define $g(n)=\sum_{i\leq n}f(i)$. The proof is by  
$\Sigma^0_1$-induction on $r$.  

For $r=1$, let $n_1=f(1)+2$.  Suppose $n_r$ works for $r$. We claim that $n_{r+1}=(r+1)f(n_r)+1$ works for 
$r+1$. Let $C\colon n_{r+1}\to r+1$ be any coloring. Set $H_i=\{x<n_{r+1}\colon C(x)= i\}$ for 
$i\leq r$. We may assume that $|H_i|\leq f(\gs(H_i))$ for every $i\leq r$, for otherwise we are 
done. We may also assume that $\gs(H_i)\leq n_r$ for every $i\leq r$, for otherwise 
there exists an interval of size $n_r$ which avoids some color $i$, and the conclusion 
follows by induction. As $f$ is nondecreasing, we have that 
$f(\gs(H_i))\leq f(n_r)$ for all $i\leq r$. Therefore, $|H_i|\leq f(n_r)$ for every 
$i\leq r$, and  
\[        n_{r+1}=\sum_{i\leq r}|H_i|\leq (r+1)f(n_r), \] a contradiction. 
\end{proof}

We define Brown's numbers as follows. 

\begin{Definition}
For $f\colon\Nb\to\Nb$ and $r>0$ let $B_f(r)$ be the least natural number $n$ 
such that every $r$-coloring of $n$ has a homogeneuos set $H$ with  
$|H|>f(\gs(H))$.  
\end{Definition}

Remember that the superexponential function $2_k(n)$ is defined by $2_0(n)=n$ and $2_{k+1}(n)=2^{2_k(n)}$. Let $2_r=2_r(1)$.
The proof of Theorem \ref{finite brown} gives superexponential upper 
bounds $n_r$ for $B_f(r)$.  For instance, if $f(d)=2^d$, then $n_r\geq 2_r$.

\begin{Theorem}[Ardal \cite{Ardal10}]
Let $f(d)=md$ with $m>0$. Then for all $r>0$, $B_f(r)\leq r(2^{mr}-mr)+1$.
\end{Theorem}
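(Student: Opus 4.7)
The plan is to argue by contradiction, reducing the problem to an explicit combinatorial lower bound on a ``minimum-span'' function for sequences of gaps. Fix $N=r(2^{mr}-mr)+1$ and suppose $C\colon[0,N-1]\to r$ admits no monochromatic $H$ satisfying $|H|>m\gs(H)$. For each colour $i$, enumerate its colour class as $H_i=\{h_{i,0}<h_{i,1}<\dots<h_{i,n_i-1}\}$ with consecutive gaps $g_{i,j}:=h_{i,j}-h_{i,j-1}$. Taking any $l+1$ consecutive elements of $H_i$ produces a monochromatic set whose gap size equals $\max(g_{i,j+1},\dots,g_{i,j+l})$, so applying the hypothesis to each such set translates into the window constraint
\[
\max(g_{i,j+1},\dots,g_{i,j+l})\geq\lfloor l/m\rfloor+1
\]
for every $i$, every admissible $j$, and every $l\geq 1$. (Non-consecutive subsets yield only weaker constraints since their gaps are sums of consecutive $g_{i,j}$.)

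Since $\sum_i n_i=N$, pigeonhole produces a colour class with $n_i\geq\lceil N/r\rceil=2^{mr}-mr+1$, giving $k:=n_i-1\geq 2^{mr}-mr$ gaps satisfying the window constraint. On the other hand $\sum_{j=1}^{k}g_{i,j}=h_{i,k}-h_{i,0}\leq N-1$. So the contradiction will follow as soon as we establish the purely combinatorial claim: any positive integer sequence $(g_1,\dots,g_k)$ with $k\geq 2^{mr}-mr$ satisfying the window constraint has $\sum_j g_j\geq N$.

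Define the minimum-span function
\[
s_m(k):=\min\Bigl\{\textstyle\sum_{j=1}^{k}g_j:(g_1,\dots,g_k)\in\Nb_{>0}^{k}\text{ satisfies the window constraint}\Bigr\}.
\]
The length-$k$ window forces at least one $g_j$ to equal $\lfloor k/m\rfloor+1$; placing this ``large'' gap at a position $j^*$ satisfies every window crossing $j^*$ automatically and leaves the gaps strictly to its left and right as two independent subproblems of the same form. This yields the exact recurrence
\[
s_m(k)=(\lfloor k/m\rfloor+1)+\min_{1\leq j^*\leq k}\bigl[s_m(j^*-1)+s_m(k-j^*)\bigr].
\]

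The principal expected obstacle is the identity $s_m(2^{mr}-mr)\geq r\cdot 2^{mr}-mr^2+1=N$. For $m=1$ the proof is transparent: one verifies by induction on $k$ that the incremental differences satisfy $s_1(k)-s_1(k-1)=\lfloor\log_2 k\rfloor+2$, and summing this telescoping formula evaluates $s_1(2^r-r)=r\cdot 2^r-r^2+1$ by a routine manipulation. For general $m$ the balanced split still drives the recursion, but the rescaling by $m$ at each level makes the bookkeeping more delicate; a careful induction on $r$, tracking both the forced maximum and the split positions, should confirm $s_m(2^{mr}-mr)\geq N$, with strict inequality already appearing at $m=2,r=2$ where $s_2(12)=26>25=N$.
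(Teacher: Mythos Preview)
The paper does not give its own proof of this theorem; it merely states the result with a citation to Ardal~\cite{Ardal10}, and the subsequent theorem refers the reader to a chain of lemmas and theorems in that paper. So there is no in-paper argument to compare your proposal against.

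On its own merits, your reduction is sound: passing to the colour class of maximal size, recording its consecutive gaps, and observing that the hypothesis $|H|\le m\cdot\gs(H)$ for every monochromatic interval translates into the window constraint $\max(g_{j+1},\dots,g_{j+l})\ge\lfloor l/m\rfloor+1$ is correct, and the recurrence for the minimum-span function $s_m(k)$ is justified (the large gap automatically handles every window crossing it, and the two sides are independent subproblems). The pigeonhole step and the arithmetic $\lceil N/r\rceil=2^{mr}-mr+1$ are also fine.

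The genuine gap is that the heart of the argument---the inequality $s_m(2^{mr}-mr)\ge r(2^{mr}-mr)+1$---is not proved. You carry out the $m=1$ case and check a single instance for $m=2$, but for general $m$ you write only that ``a careful induction on $r$ \dots\ should confirm'' the bound. That is precisely the nontrivial combinatorial content of the theorem, and without it the proposal is a reduction to an unproved lemma of comparable difficulty to the original statement. If you intend to pursue this route you must actually establish, for all $m\ge 1$ and $r\ge 1$, a closed-form or inductive lower bound on $s_m$ at the threshold $k=2^{mr}-mr$; the balanced-split heuristic is suggestive but does not by itself constitute a proof, and the ``more delicate bookkeeping'' you allude to is exactly what remains to be done.
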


Recall that, for $f\colon\Nb\to\Nb$, $\bl_f$ is the statement ``For all $r>0$ there exists $n$ 
such that every  $C\colon n\to r$ has a $C$-homogeneous set $H$ such that $|H|> f(\gs(H))$. 

\begin{Theorem}
$\efa\vdash (\forall m>0)\bl_{d\mapsto md}$.
\end{Theorem}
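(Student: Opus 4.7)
The plan is to specialise the inductive proof of Theorem \ref{finite brown} to $f(d) = md$ and verify that, for this specific family, the witness $n_r$ stays elementary, so that the $\Sigma_1^0$-induction used in the generic proof can be replaced by bounded induction admissible in $\efa$.

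Fix $m > 0$. For $f(d) = md$, the recursion from Theorem \ref{finite brown} reads $n_1 = m + 2$, $n_{r+1} = m(r+1) n_r + 1$. A straightforward unfolding gives $n_r \leq (2m(r+1))^{r+1}$, so $r \mapsto n_r$ is majorised by an elementary closed term in $m$ and $r$. In particular, $n_r$, $r^{n_r}$, and $2^{n_r}$ are all provably total in $\efa$, and $n_r$ itself is definable in $\efa$ by bounded primitive recursion.

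I would then prove by induction on $r$ the bounded formula
\[ \varphi(m, r) \equiv (\forall C \leq r^{n_r})(\exists H \leq 2^{n_r})\, \psi(m, r, C, H), \]
where $\psi$ is the quantifier-free predicate ``$C$ codes an $r$-colouring of $n_r$ and $H \subseteq n_r$ is $C$-homogeneous with $|H| > m \gs(H)$''. All quantifiers are bounded by elementary terms, so $\varphi$ is $\Delta_0(\exp)$ and induction on $r$ is admissible in $\efa$. The base case $r = 1$ is trivial, since the whole interval $n_1 = m + 2$ is itself homogeneous with gap size $1$ and size exceeding $m$. The inductive step is the counting argument from Theorem \ref{finite brown}: given $C \colon n_{r+1} \to r + 1$ with colour classes $H_0, \ldots, H_r$, either some $H_j$ has $\gs(H_j) > n_r$, in which case the corresponding internal gap of $H_j$ contains an interval of length $n_r$ coloured in only $r$ colours, and $\varphi(m, r)$ supplies the witness; or $\gs(H_i) \leq n_r$ (hence $|H_i| \leq m n_r$) for every $i$, yielding the numerical contradiction $n_{r+1} = \sum_i |H_i| \leq (r+1) m n_r < n_{r+1}$.

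The principal technical point is that Theorem \ref{finite brown} invokes $\Sigma_1^0$-induction on the formula ``there exists $n$ \ldots'', a scheme that $\efa$ lacks. The reduction to bounded induction succeeds here because for $f(d) = md$ we have an explicit elementary witness $n_r$, which allows the existential quantifier on $n$ to be replaced by an elementary closed term. Beyond that bookkeeping, the combinatorics involve no unbounded search and proceed entirely within $\efa$.
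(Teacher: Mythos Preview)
Your argument is correct, but it takes a different route from the paper. The paper does not specialise the recursion of Theorem \ref{finite brown}; instead it cites Ardal \cite{Ardal10}, whose direct combinatorial construction yields the explicit bound $B_{d\mapsto md}(r)\le r(2^{mr}-mr)+1$, and simply observes that Ardal's lemmas and theorems formalise in $\efa$.

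The comparison is instructive. Your approach is entirely self-contained relative to the present paper: you reuse the counting argument of Theorem \ref{finite brown} and note that for linear $f$ the recursion $n_{r+1}=m(r+1)n_r+1$ stays bounded by a single exponential in $m,r$, so the existential witness becomes an elementary closed term and the $\Sigma^0_1$-induction collapses to $\Delta_0(\exp)$-induction, which $\efa$ has. This is clean and requires no outside input. The paper's route, by contrast, imports a sharper bound (roughly $r2^{mr}$ versus your $m^r r!$) at the cost of deferring all the combinatorics to \cite{Ardal10}. Both bounds are elementary, so for the purpose of the theorem either suffices; your argument has the advantage of making transparent \emph{why} the $\rca$ proof of Theorem \ref{finite brown} drops to $\efa$ precisely when $f$ grows slowly enough to keep the iterates $n_r$ elementary.

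One small point worth stating explicitly in your write-up: when you invoke the induction hypothesis on an interval of length $n_r$ inside $n_{r+1}$, you are implicitly shifting and relabelling colours. This is of course harmless (gap size and cardinality are shift-invariant, and a bijection $r+1\setminus\{j\}\to r$ is a bounded object), but since you are tracking formalisability in $\efa$ it is worth a sentence to confirm that these manipulations are $\Delta_0$.
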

\begin{proof}
The reader can check that the proofs of Lemma 10, 11, 12, 13, 14, 22, 23 and Theorem 15, 24 of \cite{Ardal10} can be formalized in $\efa$.
\end{proof}

We aim to show that  $\rca^*$ does not prove the finite version of Brown's lemma by proving that $B_f(r)$ is not elementary recursive for $f(d)=2^d$.

\begin{Proposition}
The function $g(n)=2_{\log_2 n}$ is not elementary recursive.
\end{Proposition}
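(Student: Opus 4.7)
The plan is to exploit the fact, recalled in the Preliminaries, that every elementary recursive function is eventually dominated by $2_k(n)$ for some fixed $k$. So I would argue that $g$ grows faster than any such $2_k$, and hence cannot be elementary recursive.

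First, I would suppose for contradiction that $g$ is elementary recursive, and fix $k$ with $g(n)\leq 2_k(n)$ for all sufficiently large $n$. The key move is to evaluate both sides at a well-chosen input, namely $n=2_m$ for large $m$. On the left, since $2_m$ is a power of $2$ (for $m\geq 1$), we have $\log_2(2_m)=2_{m-1}$ exactly, and so
\[
 g(2_m)=2_{\log_2(2_m)}=2_{2_{m-1}}.
\]
On the right, the identity $2_k(2_m)=2_{k+m}$ is immediate once one views $2_k(\cdot)$ as a tower of $k$ twos topped by its argument: stacking $k$ twos on top of the $m$-twos-over-$1$ tower $2_m$ produces a tower of $k+m$ twos over $1$.

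Second, I would combine these to get $2_{2_{m-1}}\leq 2_{k+m}$. Since the map $r\mapsto 2_r$ is strictly increasing on $\Nb$, this forces $2_{m-1}\leq k+m$, which is patently false for $m$ sufficiently larger than $k$, because $2_{m-1}$ dominates every linear function of $m$ (indeed $2_{m-1}\geq m$ is a trivial induction on $m$, so $2_{m-1}=2^{2_{m-2}}\geq 2^m\gg k+m$ for large $m$). This is the desired contradiction.

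There is essentially no obstacle beyond the bookkeeping with towers of twos; the only point worth being precise about is the identity $2_k(2_m)=2_{k+m}$, which is just the associativity of composition of the tower construction. The choice of test input $n=2_m$ is what makes both sides collapse to clean towers and exposes the gap between the iteration height $\log_2 n$ in $g(n)$ and the fixed iteration height $k$ allowed by the elementary recursive bound.
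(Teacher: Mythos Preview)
Your proposal is correct and follows essentially the same approach as the paper: assume an elementary bound $2_k(\cdot)$, plug in $n=2_m$, and compare tower heights. The only cosmetic difference is that the paper chooses the explicit witness $m=k+2$ (so that the needed inequality $2_{k+1}\geq 2k+2$ holds for every $k$), whereas you leave $m$ as a sufficiently large parameter; the underlying argument is identical.
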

\begin{proof}
Suppose $g$ is elementary recursive. Then there exists a number $k$ such that $2_{\log_2 n}<2_k(n)$ for all $n$.
Let $n=2_{k+2}$. Then $2_{\log_2 n}=2_{2_{k+1}}$ and $2_k(n)=2_{2k+2}$. On the other hand $2_{k+1}\geq 2k+2$ for all $k$, and hence $2_{2_{k+1}}\geq 2_{2k+2}$, a contradiction. 
\end{proof}

\begin{Theorem}\label{not elementary}
Let $f(d)=2^d$. Then for all $r>0$, $B_f(r)> 2_{\log_2(r)}$. 
\end{Theorem}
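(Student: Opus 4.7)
The plan is to construct, for each $k \geq 0$, an explicit $2^k$-coloring of $[0, 2_k)$ admitting no homogeneous set $H$ with $|H| > 2^{\gs(H)}$; the general case will then follow because $B_f$ is monotone nondecreasing in $r$ (an $r'$-coloring with $r' \leq r$ is also an $r$-coloring and the condition on $H$ is preserved), so taking $k = \lfloor \log_2 r \rfloor$ yields $B_f(r) \geq B_f(2^k) > 2_k = 2_{\log_2 r}$.

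For the construction I would iterate the diagonalization coloring $D$ of Lemma \ref{diagonalization}, using the superexponential tower itself as the sequence of scales. Set $d_i = 2_i$ for $i < k$, so that $d_0 = 1$ and $d_{i+1} = 2^{d_i}$, and define
\[ C(x) = \langle D(d_0, x), D(d_1, x), \ldots, D(d_{k-1}, x) \rangle \in 2^k \]
for $x < 2_k$. Any $C$-homogeneous $H$ is simultaneously homogeneous for each coordinate $D(d_i, \cdot)$, so Lemma \ref{diagonalization} forces $|H| \leq d_i$ as soon as $\gs(H) \leq d_i$.

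To close the argument I would locate $d = \gs(H)$ in the tower of scales. If $d_{i-1} < d \leq d_i$ for some $1 \leq i < k$, then Lemma \ref{diagonalization} gives $|H| \leq d_i = 2^{d_{i-1}} \leq 2^{d-1} < 2^d$; the boundary cases $d = 1$ and $d > d_{k-1}$ are handled respectively by $|H| \leq 1$ and by the trivial bound $|H| \leq 2_k = 2^{d_{k-1}} < 2^d$. In every case $|H| \leq 2^{\gs(H)}$, contradicting the assumption. The only real design decision is that the base $2$ in $d_{i+1} = 2^{d_i}$ match the base of $f(d) = 2^d$; once this alignment is in place, the telescoping case analysis is routine and I do not expect any substantive obstacle.
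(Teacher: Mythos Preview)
Your argument is correct and noticeably cleaner than the paper's. The paper builds its witnessing colorings recursively: starting from $C_0 = 00$ on $n_0 = 2$, it sets $C_{s+1} = C_s D_s C_s D_s \cdots C_s D_s$ (with $2^{n_s}$ repetitions of the block $C_s D_s$), where $D_s$ is $C_s$ with each color $i$ replaced by $i + 2^s$; it then verifies via a three-part induction (tracking $|H_{i,s}|$, the interval property $(*)$, and the location of $\min/\max H_{i,s}$ inside $[0,n_s)$) that every color class satisfies $|I| \leq 2^{\gs(I)}$ on every interval. Your construction instead takes the \emph{product} of the diagonalizing $2$-colorings $D(d_i,\cdot)$ from Lemma~\ref{diagonalization} at the tower scales $d_i = 2_i$, and the verification reduces to locating $\gs(H)$ between two consecutive scales and invoking the quantitative bound $|H| \leq d_i$ already established in that lemma's proof. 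What you gain is brevity and a transparent link to the earlier diagonalization; what the paper's recursive construction gains is a self-contained argument (no appeal to Lemma~\ref{diagonalization}) and a somewhat larger explicit domain $n_s = 2n_{s-1}2^{n_{s-1}} \geq 2_s$, though both yield the same stated lower bound $B_f(2^k) > 2_k$. One tiny omission: your case split tacitly assumes $k \geq 1$ (it refers to $d_{k-1}$); the case $k = 0$ is of course trivial since the domain is then a single point.
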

\begin{proof}
 For all $s\geq 0$ we define a number $n_s\geq 2_s$ and a coloring $C_s\colon n_s\to 2^s$ witnessing $B(2^s)>n_s$. \smallskip

In general, for $f\colon\Nb\to\Nb$ nondecreasing, $B_f(r)>n$ iff there exists a  coloring $C\colon n\to r$ such that for all $i<r$ the finite set $H=\{x<n\colon C(x)=i\}$ satisfies 
\[ \tag{$*$} |I|\leq f(\gs(I)) \text{ for every interval $I$ of $H$}. \] 
Notice that $(*)$ is shift invariant.  For finite sets $A<B$ let $d(A,B)=\min B-\max A$. Note that if $A$ satisfies $(*)$ and $m\cdot|A|\leq f(d)$, then the set $H=\bigcup_{l<m} A_l$ satisfies $(*)$, where $A_0<A_1<\ldots< A_{m-1}$, each $A_l$ is a shift of $A$ and $d(A_l,A_{l+1})=d$.\smallskip

We ensure that for all $s$ and for all $i<2^s$ the set $\{x<n_s\colon C_s(x)=i\}$ satisfies $(*)$ with $f(d)=2^d$.
For $s=0$, let $n_0=2$ and $C_0=00$. For $s=1$, let $n_1=2^4=16$ and 
\[      C_1=0011001100110011. \]
Suppose we have defined $n_s$ and $C_s\colon n_s\to 2^s$. Let $n_{s+1}=2^{s+1}\cdot 2^{n_0+n_1+\ldots+n_s+1}$.
Define $C_{s+1}\colon n_{s+1}\to 2^{s+1}$ by 
\[   C_{s+1}=C_sD_sC_s D_s\ldots C_sD_s, \]
where $C_sD_s$ is repeated $2^{n_s}$ times and $D_s$ is a copy of $C_s$ with the color $i<2^s$ replaced by $i+2^s$.
By induction one can show that $n_{s+1}=2n_s2^{n_s}$. 
\begin{Claim}
For all $s$ and for all $i<2^s$ the set 
$H_{i,s}=\{x<n_s\colon C(x)=i\}$ satisfies $(*)$.
\end{Claim}
By induction on $s$ we prove that for all $i<2^s$:
\begin{enumerate}[$(1)$]
 \item $|H_{i,s}|=n_s/2^s$;
 \item $H_{i,s}$ satisfies $(*)$;
 \item $n_s=\max H_{i,s}-\min H_{i,s}+n_0+\ldots+ n_{s-1}+1$.
\end{enumerate}
For $s=0$ this is true. Suppose it is true for $s$ and let $m=2^{n_s}$. Fix $i<2^{s+1}$.  By construction either $i<2^s$ and $H_{i,s+1}=\bigcup_{l<m} A_l$, where $A_0<A_1<\ldots <A_{m-1}$ and $A_l=H_{i,s}+2ln_s$, or $i=j+2^s$ with $j<2^s$ and
$H_{i,s+1}=\bigcup_{l<m} A_l$, where $A_0<A_1<\ldots<A_{m-1}$ and $A_l=H_{j,s}+(2l+1)n_s$. Suppose $i<2^s$. The argument for $i=j+2^s$ is similar. Then
\[
  |H_{i,s+1}|=m\cdot |H_{i,s}|\stackrel{(1)}{=} 
      m\cdot \frac{n_s}{2^s}=2^{n_s}\cdot\frac{n_s}{2^s}=
      \frac{n_{s+1}}{2^{s+1}}.
  \]
We aim to show that $H_{i,s+1}$ satisfies $(*)$. By induction  $H_{i,s}$ satisfies $(*)$ and so every $A_l$ satisfies $(*)$ because $(*)$ is invariant under shift. Since $m\cdot|A_0|=|H_{i,s+1}|=2^{n_s}\cdot\frac{n_{s}}{2^s}=2^{n_0+\ldots+n_s+1}$, it is sufficient to show that $d(A_l,A_{l+1})=d(A_0,A_1)=n_0+n_1+\ldots+ n_{s}+1$.  Now 
\begin{multline*}
d(A_0,A_1)= n_s+\min H_{i,s}+(n_s-\max H_{i,s})\stackrel{(3)}{=}\\ n_s+(n_0+\ldots+n_{s-1}+1)=n_0+\ldots+n_s+1.
\end{multline*}
Let $a=\min H_{i,s+1}$ and $b=\max H_{i,s+1}$. It remains to show that $n_{s+1}=b-a+n_0+\ldots+ n_{s}+1$. Write 
$n_{s+1}$ as $a+(b-a)+(n_{s+1}-b)$.
Now 
\[   a=\min H_{i,s+1}=\min H_{i,s} \]
and
\[  n_{s+1}-b=n_{s+1}-\max H_{i,s+1}= (n_s-\max H_{i,s})+n_s. \]
It follows that 
\begin{multline*}  n_{s+1}=\min H_{i,s}+(b-a)+n_s-\max H_{i,s}+n_s =  \\
     b-a + n_s +(n_s+\min H_{i,s}-\max H_{i,s}) \stackrel{(3)}{=}  
     b-a +n_s+(n_0+\ldots +n_{s-1}+1) = \\ b-a+n_0+\ldots n_s+1. \end{multline*}
This completes the proof of the claim.  By induction it is easy to prove that $n_s\geq 2_s$. It follows that 
\[ B(r)\geq B(2^{\log_2(r)})> n_{\log_2(r)}\geq 2_{\log_2(r)}, \] as desired.
\end{proof}

Notice that Theorem \ref{not elementary} is provable in $\is1$.

\begin{Corollary}
$\efa\nvdash\bl_{d\mapsto 2^d}$. In particular, $\rca^*$ does not prove the finite version of Brown's lemma.
\end{Corollary}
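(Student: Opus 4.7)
My plan is to combine Theorem \ref{not elementary} and the preceding proposition on the non-elementariness of $g(n)=2_{\log_2 n}$ with the characterization of provably recursive functions of $\efa$ recalled in the Preliminaries. The idea is that a proof of $\bl_{d\mapsto 2^d}$ in $\efa$ would force $B_f$ (for $f(d)=2^d$) to be elementary recursive, contradicting the lower bound $B_f(r)>g(r)$.

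First I would observe that for $f(d)=2^d$ the statement $\bl_f$ takes the form $\forall r>0\,\exists n\,\psi(r,n)$, where $\psi(r,n)$ asserts that every coloring $C\colon n\to r$ admits a homogeneous set $H\subseteq n$ with $|H|>2^{\gs(H)}$. Since $C$, $H$, and $\gs(H)$ are all bounded in terms of $n$, the formula $\psi$ is $\Delta_0(\exp)$, so $\bl_f$ is a $\Pi_2$-sentence in the language of $\efa$. A proof of $\bl_f$ in $\efa$ would therefore certify the Skolem function $r\mapsto(\text{least }n)\,\psi(r,n)$ as provably recursive in $\efa$, hence elementary recursive. But this Skolem function is exactly $B_f$, and by Theorem \ref{not elementary} together with the proposition, $B_f$ dominates the non-elementary function $g$, a contradiction. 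This yields $\efa\nvdash\bl_{d\mapsto 2^d}$.

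For the in-particular claim about $\rca^*$, I would invoke the conservation result recalled in the Preliminaries: since $\rca^*\subseteq\wkl^*$ and $\wkl^*$ is $\Pi_2$-conservative over $\efa$, every $\Pi_2$-sentence provable in $\rca^*$ is also provable in $\efa$. The $\Pi_2$-sentence $\bl_{d\mapsto 2^d}$ is an immediate instance of the finite Brown's lemma for the elementary function $d\mapsto 2^d$, so any proof of the finite Brown's lemma in $\rca^*$ would specialize to a proof of $\bl_{d\mapsto 2^d}$ in $\rca^*$, hence in $\efa$, contradicting the first part. The only delicate step is pinning down the quantifier complexity of $\psi$ so that the Skolem-function extraction is legitimate; this amounts to verifying the $\Delta_0(\exp)$-definability of ``every $r$-coloring of $n$ has a homogeneous set $H$ with $|H|>2^{\gs(H)}$'', which is routine.
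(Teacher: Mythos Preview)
Your proposal is correct and follows essentially the same route as the paper: observe that $\bl_f$ for $f(d)=2^d$ is $\Pi_2$ (equivalently, that $B_f$ has a $\Sigma_0(\exp)$ graph), use the characterization of provably recursive functions of $\efa$ together with Theorem \ref{not elementary} to conclude $\efa\nvdash\bl_f$, and then invoke $\Pi_2$-conservativity of $\rca^*$ (via $\wkl^*$) over $\efa$ for the second part. Your explicit detour through $\wkl^*$ is exactly how the paper's Preliminaries justify the conservation step.
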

\begin{proof}
Let $f(d)=2^d$. The function $B_f(r)$ is $\Sigma_0$-definable and so $\efa\vdash  \bl_{f}$ iff $B_f(r)$ is provably recursive in $\efa$ iff $B_f(r)$ is elementary recursive. By Theorem \ref{not elementary} the function $B_f(r)$ is not elementary recursive. Therefore $\efa\nvdash\bl_f$. The second part follows from the fact that the statement $\bl_{f}$ is $\Pi_2$ and $\rca^*$ is conservative over $\efa$ for $\Pi_2$-sentences. 
\end{proof}

As the finite van der Waerden's theorem is already 
provable in $\rca$ and presumably in $\rca^*$, the question whether Brown's lemma 
implies van der Waerden's theorem can be settled only over a very weak system of 
arithmetic.

\begin{Question}
What is the relationship between (the finite versions of) Brown's lemma and van der 
Waerden's theorem over a suitable bounded fragment of second-order arithmetic?
\end{Question}

\appendix

\section{}

The classical proof of Brown's lemma  (see for instance
\cite[Lemma 1]{Brown71} or \cite[Theorem 10.32]{LanRob04}) is 
based on the following fact. 

\begin{Fact}\label{fact}
Let $r\in\Nb$. If $S\subseteq r^{<\Nb}$ is infinite, then there exists 
$g\colon\Nb\to r$ such that for all $k$ there is $\sigma\in S$ with 
$g\restriction k\subseteq\sigma$.
\end{Fact}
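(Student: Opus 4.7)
The plan is to package $S$ as a tree and invoke K\"{o}nig's lemma. Concretely, I would set
\[ T = \{\tau \in r^{<\Nb} : (\exists \sigma \in S)(\tau \subseteq \sigma)\}. \]
By construction $T$ is closed under initial segments, so it is a tree, and since each node has at most $r$ immediate successors, $T$ is finitely branching.

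Next I would check that $T$ has a node at every level. For each $k$ there are only finitely many sequences in $r^{<\Nb}$ of length less than $k$, so the infiniteness of $S$ forces $S$ to contain sequences of unbounded length; taking $\sigma \in S$ with $|\sigma| \geq k$ and restricting it to its first $k$ values yields an element of $T$ of length $k$.

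At this point K\"{o}nig's lemma for finitely branching infinite trees delivers an infinite path $g : \Nb \to r$ through $T$. For every $k$ we then have $g \restriction k \in T$, which by the definition of $T$ means there exists $\sigma \in S$ with $g \restriction k \subseteq \sigma$, as required.

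The subtle point, which is presumably the whole motivation for this appendix, is that the defining condition ``$\tau \in T$ iff $\exists \sigma \in S\,(\tau \subseteq \sigma)$'' is only $\Sigma^0_1$ in $S$, and its negation is $\Pi^0_1$; over $\rca$ the tree $T$ therefore need not exist as a set. Consequently the invocation of K\"{o}nig's lemma above is not an application of $\wkl$ (which concerns $\Delta^0_1$-definable trees) but effectively requires $\aca$ to first form $T$ as a set, which is the ``disguised use of $\aca$'' alluded to in the introduction.
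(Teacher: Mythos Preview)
Your argument is correct and matches the paper's own proof essentially line for line: the paper too defines $T=\{\tau:(\exists\sigma\in S)\,\tau\subseteq\sigma\}$, notes it is an infinite finitely branching tree, and applies K\"{o}nig's Lemma to obtain $g$. Your additional remark about the $\Sigma^0_1$ definition of $T$ forcing a hidden appeal to $\aca$ is exactly the point the appendix then makes precise in the subsequent proposition.
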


\begin{proof}[\textbf{Classical proof of Brown's lemma}]
By induction on $r$. If $r=1$, there is nothing to prove. Let $C\colon \Nb\to r+1$. We 
say that $\sigma\in \Nb^{<\Nb}$ is a factor of $C$, and write $\sigma\in S(C)$, if 
$\sigma\subseteq \langle C(y),C(y+1),C(y+2),\ldots\rangle$ for some 
$y$. Let $S$ be the set of factors which avoid the color $r$, that is 
$S=S(C)\cap r^{<\Nb}$. 

We may assume that $\{x\in\Nb\colon C(x)=r\}$ is not piecewise syndetic, otherwise we are done. We claim 
that for every $d$ there is $\sigma\in S$ of length $d$. Suppose not. Let $d$ be 
such that every $\sigma\in S(f)$ of length $d$ does not avoid $r$. Then it is easy 
to see that $\{x\in\Nb\colon C(x)=r\}$ is piecewise $d$-syndetic, contrary to our assumption. Therefore 
$S$ is infinite.  

By Fact \ref{fact}, there exists $g\colon \Nb\to r$ such that for all 
$n$ there exists 
$\sigma\in S$ with $g\restriction n\subseteq \sigma$. By induction, let $i<r$ such 
that $\{x\in\Nb\colon g(x)=i\}$ is piecewise syndetic and let $d$ be a witness.  We claim that 
$\{x\in\Nb\colon C(x)=i\}$ is piecewise $d$-syndetic.  Fix $n$. There exists a set 
$F$ of size $n$ and gaps bounded by $d$ such that $g(x)=i$ for all $x\in F$. Let $k>F$. 
By the assumption on $g$, let $\sigma\in S$ such that $g\restriction k\subseteq\sigma$. 
Now, 
$\sigma$ is a factor of $C$, say $\sigma \subseteq \langle C(y),C(y+1),\ldots\rangle$. 
Hence, for all $x<k$ we have that $g(x)=C(y+x)$. It easily follows that $y+F$ is as 
desired (size $n$, gaps bounded by $d$, $C$-homogeneous for color $i$).
\end{proof}

A loose formalization of the above proof goes through 
$\aca$ plus $\Pi^1_1$-induction.  In particular, Fact \ref{fact} is provable in $\aca$ 
and  actually equivalent to $\aca$. 

\begin{Proposition}\label{aca}
Over $\rca$, the following are equivalent:
\begin{enumerate}[ $(1)$]
 \item $\aca$
 \item Let $r\in\Nb$. If $S\subseteq r^{<\Nb}$ is infinite, then there exists 
$g\colon\Nb\to r$ such that for all $n$ there is $\sigma\in S$ with $g\restriction 
n\subseteq\sigma$.
\item If $S\subseteq 2^{<\Nb}$ is infinite, then there exists $g\colon\Nb\to 2$ such that 
for all $n$ there is $\sigma\in S$ with $g\restriction n\subseteq\sigma$.
\end{enumerate}
\end{Proposition}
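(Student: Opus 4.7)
The plan is to establish the cycle $(1)\Imp(2)\Imp(3)\Imp(1)$. The middle implication is immediate by specializing to $r=2$, and the forward direction $(1)\Imp(2)$ is essentially K\"{o}nig's Lemma applied to the right tree. All the content lies in the reversal $(3)\Imp(1)$, where we exploit the statement for $r=2$ to recover the range of an arbitrary injection $f\colon\Nb\to\Nb$; this suffices to give $\aca$ over $\rca$.

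For $(1)\Imp(2)$ I would work in $\aca$ and, from an infinite $S\subseteq r^{<\Nb}$, form by arithmetic comprehension the tree $T=\{\tau\in r^{<\Nb}\colon \tau\subseteq\sigma\text{ for some }\sigma\in S\}$. Since there are only finitely many strings of each length over an alphabet of size $r$, the infinitude of $S$ forces $S$ to contain strings of arbitrarily large length, so $T$ has a node at every level. As $T$ is $r$-branching, K\"{o}nig's Lemma (available in $\aca$) delivers an infinite path $g$ through $T$, and by the definition of $T$ every $g\restriction n$ extends to some element of $S$, as (2) requires.

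For $(3)\Imp(1)$ the plan is to encode an arbitrary injection $f$ into a suitable infinite $S\subseteq 2^{<\Nb}$ so that any $g$ supplied by (3) computes the characteristic function of $\ran(f)$. Specifically, I would set $\sigma_s\in 2^s$ to be the characteristic string of $\{f(0),\ldots,f(s-1)\}\cap\{0,\ldots,s-1\}$ and let $S=\{\sigma_s\colon s\in\Nb\}$. The choice $|\sigma_s|=s$ has two virtues: the $\sigma_s$ are pairwise distinct (so $S$ is infinite), and membership in $S$ reduces to the $\Delta^0_0$ condition $\sigma=\sigma_{|\sigma|}$, so that $S$ exists in $\rca$. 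From the $g$ produced by (3) I would deduce that $g(n)=1\Biimp n\in\ran(f)$: for any given $n$, taking $m$ larger than $n$ (and larger than the $f$-preimage of $n$ when $n\in\ran(f)$) forces any witnessing $\sigma_s\supseteq g\restriction m$ to satisfy $s\geq m$, at which point $\sigma_s(n)$ reports correctly whether $n\in\ran(f)$. The main subtlety, and what I expect to be the only real obstacle, is that (3) provides no coherence between the different $\sigma\in S$ witnessing different initial segments of $g$, so the encoding must be arranged so that \emph{every} sufficiently long extension of an initial segment of $g$ within $S$ already agrees with the characteristic function of $\ran(f)$ on the relevant coordinates; the length convention $|\sigma_s|=s$ is designed precisely to make this work.
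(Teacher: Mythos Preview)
Your proposal is correct and follows essentially the same route as the paper: for $(1)\Imp(2)$ you close $S$ downward to a finitely branching tree and apply K\"{o}nig's Lemma, and for $(3)\Imp(1)$ your set $S=\{\sigma_s:s\in\Nb\}$ with $\sigma_s(y)=1\Biimp(\exists x<s)\,f(x)=y$ is exactly the paper's $S$ (described there by the membership condition $(\forall y<|\sigma|)\bigl(\sigma(y)=1\leftrightarrow(\exists x<|\sigma|)\,f(x)=y\bigr)$), and your verification that $g$ computes $\chi_{\ran(f)}$ matches the paper's. The ``main subtlety'' you flag is handled precisely as you describe and is not really an obstacle once the length convention $|\sigma_s|=s$ is in place.
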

\begin{proof}
$(1)$ implies $(2)$. Define a tree $T\subseteq r^{<\Nb}$ by letting $\tau\in T$ 
iff $(\exists \sigma\in S)\tau\subseteq\sigma$. $T$ is an infinite finitely branching 
tree. By K\"{o}nig's Lemma, there exists an infinite path $g$. Then $g$ is as desired.
$(2)$ implies $(3)$ is trivial. 

For $(3)$ implies $(2)$, let $f\colon\Nb\to\Nb$ be a 
one-to-one 
function. We aim to show that the range of $f$ exists. Define $S\subseteq 2^{<\Nb}$ by 
letting $\sigma\in S$ if and only if $(\forall y<|\sigma|)(\sigma(y)=1\leftrightarrow 
(\exists x<|\sigma|)f(x)=y)$. Then $S$ is infinite. By $(2)$, let $g$ be such that every 
initial segment of $g$ is an initial segment of some $\sigma\in S$. We claim that 
$y\in\ran f$ iff $g(y)=1$. Suppose $g(y)=1$ and let $g\restriction y+1\subseteq\sigma$ 
with $\sigma\in S$. Then $\sigma(y)=1$ and so $y\in\ran f$. Suppose that $y\in\ran f$ and 
let $f(x)=y$. Let $n>x,y$ and $\sigma\in S$ such that $g\restriction n\subseteq\sigma$. 
Then $\sigma(y)=1$ and so $g(y)=1$.
\end{proof}

\bibliography{brown}{} 
\bibliographystyle{alpha}
 
\end{document}